\newtheorem{thm}{Theorem}[section]
\newtheorem{cor}[thm]{Corollary}
\newtheorem{lem}[thm]{Lemma}
\newtheorem{prop}[thm]{Proposition}
\theoremstyle{definition}
\newtheorem{defn}[thm]{Definition}
\theoremstyle{remark}
\numberwithin{equation}{section}
\begin{document}

\title{On extensions of partial isomorphisms} 

\author{Mahmood Etedadialiabadi}

\address{Department of Mathematics, University of North Texas, 1155 Union Circle \#311430, Denton, TX 76203, USA}

\email{mahmood.etedadialiabadi@unt.edu}

\author{Su Gao}

\address{Department of Mathematics, University of North Texas, 1155 Union Circle \#311430, Denton, TX 76203, USA}\email{sgao@unt.edu}

\thanks{The second author's research was partially supported by the NSF grant DMS-1800323.}

\subjclass[2010]{Primary  03C13,03C55; Secondary 20E06,20E26}

\keywords{Hrushovski property, extension property for partial automorphisms (EPPA), partial isomorphism, HL-extension, HL-map, coherent, ultraextensive, ultrahomogeneous, locally finite, Henson Graph}


\begin{abstract}
In this paper we study a notion of HL-extension (HL standing for Herwig--Lascar) for a structure in a finite relational language $\mathcal{L}$. We give a description of all finite minimal HL-extensions of a given finite $\mathcal{L}$-structure. In addition, we study a group-theoretic property considered by Herwig--Lascar and show that it is closed under taking free products. We also introduce notions of coherent extensions and ultraextensive $\mathcal{L}$-structures and show that every countable $\mathcal{L}$-structure can be extended to a countable ultraextensive structure. Finally, it follows from our results that the automorphism group of any countable ultraextensive $\mathcal{L}$-structure has a dense locally finite subgroup.
\end{abstract}

\maketitle
\bigskip\noindent
\section{Introduction}

Let $C_1,C_2$ be two structures in a given relational language $\mathcal{L}$. A \textbf{partial isomorphism} from $C_1$ into $C_2$ is an isomorphism of a substructure of $C_1$ onto a substructure of $C_2$. A \textbf{partial automorphism} (or a partial isomorphism) of an $\mathcal{L}$-structure $C$ is an isomorphism between two (possibly different) substructures of $C$.

\begin{defn}
Let $\mathcal{C}$ be a class of $\mathcal{L}$-structures (containing both finite and infinite structures). $\mathcal{C}$ is said to have \textbf{the extension property for partial automorphisms (EPPA)} if whenever $C_1$ and $C_2$ are structures in $\mathcal{C}$, $C_1$ is finite, $C_1$ is a substructure of $C_2$, and every partial automorphism of $C_1$ extends to an automorphism of $C_2$, then there exist a finite structure $C_3$ in $\mathcal{C}$ which extends $C_1$ and every partial automorphism of $C_1$ extends to an automorphism of $C_3$. 
\end{defn}
Hrushovski [\ref{Hru_B}], was one of the first papers to consider the question of whether a certain class of structures has the EPPA. More precisely, he showed that the class of simple graphs has the EPPA, that is, every finite graph $G$ can be extended to another finite graph, $H$, such that every partial isomorphism of $G$ extends to an automorphism of $H$. Herwig--Lascar [\ref{HL_B}], generalized the result of Hrushovski to finite relational structures. 
\begin{defn}
If $M$ is an $\mathcal{L}$-structure and $\mathcal{T}$ a set of $\mathcal{L}$-structures, we say that $M$ is \textbf{$\mathcal{T}$-free} if there is no structure $T\in\mathcal{T}$ and homomorphism $h :T\rightarrow M$.
\end{defn}

Here we use the same definition of a homomorphism as in [\ref{HL_B}]. That is, if $M$ and $N$ are $\mathcal{L}$-structures, a homomorphism from $M$ to $N$ is a map $h: M\to N$ such that, if $n$ is an integer, $R$ is an $n$-ary relation symbol of $\mathcal{L}$, and $a_1, a_2,\dots,a_n$ are elements of $M$ with $M\vDash R(a_1,a_2,\dots, a_n)$, then $N\vDash R(h(a_1),h(a_2),\dots, h(a_n))$.

\begin{thm}[Herwig--Lascar \cite{HL}]\label{thm_HL}
Let $\mathcal{L}$ be a finite relational language and $\mathcal{T}$ a finite set of finite $\mathcal{L}$-structures. Then the class of all finite $\mathcal{T}$-free $\mathcal{L}$-structures has the EPPA.
\end{thm}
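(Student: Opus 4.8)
The plan is to reduce the EPPA assertion to a statement about finite quotients of a free group and then to finitize an infinite ``free'' model by means of the profinite topology. Since $C_1$ is finite it has only finitely many partial automorphisms $p_1,\dots,p_m$, with $\mathrm{dom}(p_i)$ and $\mathrm{ran}(p_i)$ substructures of $C_1$; producing the required $C_3$ amounts to producing a finite $\mathcal{T}$-free structure that contains an embedded copy of $C_1$ together with automorphisms $\widehat{p_1},\dots,\widehat{p_m}$ extending the $p_i$ (this directly extends every partial automorphism of $C_1$, which is exactly what EPPA for the class of finite $\mathcal{T}$-free structures demands). I would encode the prospective automorphisms by the free group $F=\langle x_1,\dots,x_m\rangle$, the generator $x_i$ being destined to act as $\widehat{p_i}$.

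First I would build an infinite ``generic'' witness. Set $\mathbb{M}^\ast=(F\times C_1)/{\sim}$, where $\sim$ is generated by the identifications $(g,a)\sim(gx_i^{-1},p_i(a))$ whenever $a\in\mathrm{dom}(p_i)$, and declare a relation to hold on a tuple exactly when the whole tuple lies in a single translate $g\cdot C_1$ and the corresponding relation holds back in $C_1$. Then $F$ acts on $\mathbb{M}^\ast$ by left multiplication, $C_1$ embeds as $1\cdot C_1$, and each $x_i$ restricts to an extension of $p_i$. The key soft point is that $\mathbb{M}^\ast$ is $\mathcal{T}$-free: every instance of a relation in $\mathbb{M}^\ast$ lives in one translate of $C_1$, and since the Cayley graph of $F$ is a tree, the translates touched by a homomorphic image of a connected component of some $T\in\mathcal{T}$ form a finite subtree that cannot wind around; folding the boundary sheets of this subtree onto their neighbours by the (then defined) maps $p_i$ collapses the image into a single translate, producing a homomorphism $T\to C_1$ and contradicting that $C_1$ is $\mathcal{T}$-free.

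The substantive step is finitization. I would replace $F$ by a finite quotient $G=F/N$ and $\mathbb{M}^\ast$ by the finite structure $\mathbb{M}^\ast/N$ obtained by further identifying $(g,a)$ with $(gn,a)$ for $n\in N$; for any finite-index $N$ this quotient is finite, contains $C_1$, and the images of the $x_i$ are automorphisms extending the $p_i$, so the extension property holds automatically. What is \emph{not} automatic is $\mathcal{T}$-freeness: a homomorphism $T\to\mathbb{M}^\ast/N$ need no longer collapse, because translates that were distinct (non-overlapping) in $\mathbb{M}^\ast$ may have been glued by $N$, allowing a copy of $T$ to close up. Unwinding the tree argument shows that such a spurious copy is witnessed by certain group elements landing in products of finitely many finitely generated subgroups of $F$ (the subgroups arising as the ``stabilizer data'' attached to the relations of $C_1$). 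One must therefore choose a finite-index $N$ that avoids all of these finitely many bad coincidences at once.

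The main obstacle is exactly this choice of $N$, and it is where the profinite topology on free groups enters. By the Ribes--Zalesskii theorem the relevant products of finitely generated subgroups of $F$ are closed in the profinite topology, and, thanks to the $\mathcal{T}$-freeness of $\mathbb{M}^\ast$ established above, none of them contains the identity; hence a finite-index normal subgroup $N$ can be chosen disjoint from all of them, and intersecting the finitely many resulting subgroups yields a single such $N$ that destroys every potential homomorphic image of a $T\in\mathcal{T}$ while preserving the extension data. The resulting $\mathbb{M}^\ast/N$ is then the desired finite $\mathcal{T}$-free structure $C_3$. Everything other than invoking (or reproving, via the coherent and coset-separability constructions developed later in this paper) the closedness of such subgroup products is routine bookkeeping; that separability theorem is the crux.
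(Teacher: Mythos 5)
Your plan has the right skeleton: your $\mathbb{M}^\ast$ is exactly the canonical coset structure $\Gamma$ of Section~\ref{Preliminaries} (with $F=\mathbb{F}(\mathcal{P}_{C_1})$ and the stabilizers $H_i$), and finitization by passing to quotients $\Gamma_{\vec{N}}$ for well-chosen finite-index normal subgroups is indeed how the Herwig--Lascar construction proceeds. But two of your steps fail, and each is fatal. The first is the claim that $\mathbb{M}^\ast$ is $\mathcal{T}$-free by a folding argument. Folding a leaf translate $gp_i\cdot C_1$ onto its neighbour $g\cdot C_1$ would have to send $gp_i\cdot x$ to $g\cdot p_i(x)$, which is undefined whenever $x\notin\mathrm{dom}(p_i)$; since the $p_i$ are genuinely partial, no such fold exists. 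Concretely, take $\mathcal{L}=\{R\}$ with $R$ binary, $C_1=\{a,b\}$ with $R^{C_1}=\{(a,b)\}$, $p\colon a\mapsto b$ the only nonidentity partial automorphism up to inverse, and $\mathcal{T}=\{T\}$ where $T$ is the directed path with $R^T=\{(t_1,t_2),(t_2,t_3)\}$. Then $C_1$ is $\mathcal{T}$-free, but your $\mathbb{M}^\ast$ is the directed $\mathbb{Z}$-path, which admits a homomorphism from $T$; in fact this $C_1$ has no $\mathcal{T}$-free HL-extension at all, finite or infinite. This also shows that the unconditional statement you set out to prove (``every finite $\mathcal{T}$-free $C_1$ has a finite $\mathcal{T}$-free extension realizing its partial automorphisms as automorphisms'') is simply false: you dropped the hypothesis in the definition of EPPA that some $C_2$ in the class already realizes all partial automorphisms of $C_1$ as automorphisms. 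That hypothesis is not decoration; it is precisely what makes $\Gamma$ $\mathcal{T}$-free, via the homomorphism $\psi(gH_i)=\phi(g)(a_i)$ from $\Gamma$ into $C_2$ (end of Section 2.2). The $\mathcal{T}$-freeness of the infinite unfolding is inherited from the given $C_2$, not intrinsic.

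The second gap is the finitization, which cannot run on Ribes--Zalesskii alone. The existence of a homomorphism $T\to\Gamma_{\vec{N}}$ translates, as in Lemma~\ref{lem:leftsystemC2} and condition (C3) in the proof of Theorem~\ref{HLproperty}, into the solvability of a left system of equations on the subgroups $N_iH_i$, with one variable per element of $T$ and one per relation tuple of $T$. When the system is chain-shaped (or a simple cycle) its solvability does reduce to membership of a fixed element in a translated product of finitely generated subgroups, and there RZ suffices; but as soon as some element of $T$ lies in three or more relation tuples (already for $T=K_4$ in Hrushovski's setting), the system branches, its solvability becomes a statement about nonemptiness of intersections of several translated products, and no reduction to a single product membership is available. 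Preservation of \emph{un}solvability of arbitrary left systems when each $H_i$ is enlarged to $N_iH_i$ is exactly the HL-property of free groups. That property is a strict strengthening of the RZ-property, and establishing it for free groups (with \cite{RZ} as an ingredient, but far from routinely) is the technical heart of \cite{HL}. So the step you dismiss as ``routine bookkeeping'' is where the entire difficulty of Theorem~\ref{thm_HL} lives, and the step you call the crux (quoting \cite{RZ}) is not strong enough to carry it. Note that the present paper does not reprove Theorem~\ref{thm_HL} --- it cites \cite{HL} --- and its Theorem~\ref{HLproperty}, (i)$\Rightarrow$(ii), is the corrected, conditional form of the argument you are attempting: the ambient $\mathcal{T}$-free structure $D$ with its $G$-action is taken as a hypothesis, and the group-theoretic input is the HL-property, not the RZ-property.
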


Inspired by the Herwig--Lascar theorem, we define the following notions. Let $\mathcal{L}$ be a finite relational language and $C$ be an $\mathcal{L}$-structure. An {\it HL-extension} of $C$ is a pair $(D,\phi)$, where $D$ is an $\mathcal{L}$-structure extending the structure $C$, and $\phi$ is a map from the set of all partial isomorphisms of $C$ into the set of all automorphisms of $D$ such that $\phi(p)$ extends $p$. With this notion, the Herwig--Lascar theorem can be restated as: Every finite $\mathcal{T}$-free $\mathcal{L}$-structure has a finite $\mathcal{T}$-free HL-extension.

If $C$ is an $\mathcal{L}$-structure and $(D, \phi)$ is an HL-extension of $C$, then we say $(D,\phi)$ is {\it minimal} if for all $b\in D$, there are partial isomorphisms $p_1,\dots, p_n$ of $C$ and $a\in C$ such that
$$ b=\phi(p_1)\cdots\phi(p_n)(a). $$

Our first main result of the paper is a description of all finite $\mathcal{T}$-free, minimal HL-extensions of a given finite $\mathcal{T}$-free $\mathcal{L}$-structure. To do this, we describe a canonical collection of finite $\mathcal{T}$-free, minimal HL-extensions from the original construction of Herwig--Lascar \cite{HL}, and show that every other finite $\mathcal{T}$-free, minimal HL-extension is a homomorphic image of one of the canonical extensions.  

Our next result is regarding a group-theoretic property in the profinite topology considered by Herwig--Lascar in \cite{HL}. We call it the \textit{HL-property}. For comparison, we say that a group $G$ has the \textit{RZ-property} (RZ standing for Ribes--Zalesskii) if any finite product of finitely generated subgroups of $G$ is closed in the profinite topology. Every group with the RZ-property is residually finite. Ribes-Zalesskii \cite{RZ} proved the RZ-property for finitely generated free groups. Herwig--Lascar \cite{HL} introduced the HL-property as a strengthening of the RZ-property, and showed that the Herwig--Lascar theorem is essentially equivalent to the HL-property for finitely generated free groups. Coulbois \cite{C} gave a characterization of the RZ-property in terms of extensions of partial isomorphisms and used it to show that the RZ-property is preserved under taking free products. Rosendal \cite{R} gave a characterization of the RZ-property in terms of extensions of partial isometries for finite metric spaces. Here we give a similar characterization for the HL-property of groups, and show that the HL-property is also preserved under taking free products.

In \cite{S}, Solecki proved the EPPA for the class of finite metric spaces. Furthermore, Siniora--Solecki \cite{S2}  proved a stronger version of the Herwig--Lascar theorem. They showed that for a structure $C$ with an HL-extension one can find an HL-extension $(D,\phi)$ with the property that for every triple $(p,q,r)$ of partial isomorphisms of $C$ with $p=q\circ r$ we have $\phi(p)=\phi(q)\circ\phi(r)$. This property has been referred to as \textit{coherence}. A similar concept was considered in \cite{HKN} and \cite{R}.

In this paper we introduce a slightly different notion of coherence between HL-extensions. If $C_1\subseteq C_2$ are $\mathcal{L}$-structures, $(D_1,\phi_1)$ is an HL-extension of $C_1$, and $(D_2, \phi_2)$ is an HL-extension of $C_2$, then we say that $(D_1,\phi_1)$ and $(D_2, \phi_2)$ are {\it coherent} if $D_2$ extends $D_1$, $\phi_2(p)$ extends $\phi_1(p)$ for every partial isomorphism $p$ of $C_1$, and the map $\phi_1(p)\mapsto\phi_2(p)$, where $p$ ranges over all partial isomorphisms of $C_1$, induces an isomorphism between a subgroup of all automorphisms of $D_1$ and a subgroup of all automorphisms of $D_2$. We will show that, if $\mathcal{T}$ is a finite set of $\mathcal{L}$-structure each of which is a Gaifman clique, then for any $C_1\subseteq C_2$ finite $\mathcal{T}$-free $\mathcal{L}$-structures and $(D_1, \phi_1)$ a finite $\mathcal{T}$-free HL-extension of $C_1$, there is a finite $\mathcal{T}$-free HL-extension of $C_2$ coherent with $(D_1,\phi_1)$. In the proof of this result we use the above-mentioned coherence result of Siniora--Solecki \cite{S2}. We should also mention that Hubi\v{c}ka, Kone\v{c}n\`{y}, and Ne\v{s}et\v{r}il  \cite{HKN} presented a direct combinatorial construction of HL-extensions with the same coherence property. The technical assumption in the theorem about $\mathcal{T}$ is necessary and optimal for the proof. 

We call an $\mathcal{L}$-structure $U$ {\it ultraextensive} if $U$ is ultrahomogeneous, every finite $C\subseteq U$ has a finite HL-extension $(D, \phi)$ where $D\subseteq U$, and if $C_1\subseteq C_2\subseteq U$ are finite and $(D_1,\phi_1)$ is a finite minimal HL-extension of $C_1$ with $D_1\subseteq U$, then there is a finite minimal HL-extension $(D_2, \phi_2)$ of $C_2$ such that $D_2\subseteq U$ and $(D_1,\phi_1)$ and $(D_2,\phi_2)$ are coherent.

Recall that ultrahomogeneity means that any finite partial isomorphism can be extended to an isomorphism of the entire space. Thus ultraextensiveness is a strengthening of ultrahomogeneity. We will establish the following results about ultraextensive $\mathcal{L}$-structures.

\begin{thm} 
Every countable $\mathcal{L}$-structure can be extended to a countable ultraextensive $\mathcal{L}$-structure. Moreover, if $\mathcal{T}$ is a finite set of finite $\mathcal{L}$-structures each of which is a Gaifman clique, then every countable $\mathcal{T}$-free $\mathcal{L}$-structure can be extended to a countable $\mathcal{T}$-free ultraextensive $\mathcal{L}$-structure.
\end{thm}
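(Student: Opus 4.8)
The plan is to realize $U$ as the union of an increasing chain $M=U_0\subseteq U_1\subseteq\cdots$ of finite substructures produced by a single bookkeeping construction that meets, one at each stage, the countably many requirements that together force ultraextensiveness. Since $U$ will be countable and every datum occurring in the definition of ultraextensiveness is finite, these requirements fall into three countable families: (i) \emph{homogeneity} requirements, indexed by a finite $A\subseteq U_n$, an embedding $f\colon A\to U_n$, and a point $b\in U_n$, asking that $f$ extend to an embedding of $\langle A\cup\{b\}\rangle$ into $U$; (ii) \emph{HL-existence} requirements, indexed by a finite $C\subseteq U_n$, asking that some finite HL-extension $(D,\phi)$ of $C$ sit inside $U$ with $D$ a substructure; and (iii) \emph{coherence} requirements, indexed by finite $C_1\subseteq C_2\subseteq U_n$ together with a finite minimal HL-extension $(D_1,\phi_1)$ of $C_1$ with $D_1\subseteq U_n$, asking for a finite minimal HL-extension of $C_2$ inside $U$ coherent with $(D_1,\phi_1)$. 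A routine back-and-forth argument then promotes family (i) to genuine ultrahomogeneity of $U$.

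Each requirement is met at its stage by a single finite amalgamation adjoining finitely many new points. For (i) I would adjoin to $U_n$ a single new point $b'$ whose quantifier-free type over $f(A)$ copies that of $b$ over $A$ and which is otherwise free over $U_n$ (a free amalgamation). For (ii) I would invoke the Herwig--Lascar theorem (Theorem~\ref{thm_HL}), or in the $\mathcal{T}$-free setting the description of minimal HL-extensions established earlier, to obtain a finite ($\mathcal{T}$-free) HL-extension $(D,\phi)$ of $C$, and then glue $D$ to $U_n$ by free amalgamation over $C$; this puts $D\subseteq U$ as a substructure, which is exactly what the definition requires, the maps $\phi(p)$ being automorphisms of $D$ rather than of $U$. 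For (iii) I would first pass to the finite substructure $B=\langle D_1\cup C_2\rangle$ of $U_n$, apply the coherence result proved earlier to $C_1\subseteq B$ and $(D_1,\phi_1)$ to get a finite ($\mathcal{T}$-free) HL-extension $(E,\psi)$ of $B$ coherent with $(D_1,\phi_1)$, and amalgamate $E$ onto $U_n$ freely over $B$.

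In the $\mathcal{T}$-free case the whole construction must keep every $U_n$ $\mathcal{T}$-free, and this is precisely where the hypothesis on $\mathcal{T}$ is used. If $E_1,E_2$ are $\mathcal{T}$-free with common substructure $B$ and $E$ is their free amalgam over $B$, then no $T\in\mathcal{T}$ maps homomorphically into $E$: as $T$ is a Gaifman clique, any two elements of an image $h(T)$ are joined in the Gaifman graph of $E$, yet in the free amalgam no relation joins a point of $E_1\setminus B$ to a point of $E_2\setminus B$, so $h(T)$ lies wholly in $E_1$ or in $E_2$, contradicting $\mathcal{T}$-freeness of the factors. Hence each amalgamation above preserves $\mathcal{T}$-freeness, and so does $U=\bigcup_n U_n$; the unrestricted first assertion is the degenerate case $\mathcal{T}=\varnothing$, where every free amalgamation is allowed outright.

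The step I expect to be the main obstacle is the coherence amalgamation in (iii). Enlarging the base from $C_2$ to $B=\langle D_1\cup C_2\rangle$ is what makes the amalgam well defined: since $(E,\psi)$ extends $B$, the structure $E$ induces on $D_1\cup C_2$ is exactly the one $U_n$ induces there, so $E$ and $U_n$ genuinely agree on their overlap. What then remains is to extract from $(E,\psi)$ a \emph{minimal} HL-extension of $C_2$ that is still coherent with $(D_1,\phi_1)$. I would take $D_2$ to be the part of $E$ generated from $C_2$ by the automorphisms $\psi(p)$, with $p$ ranging over partial isomorphisms of $C_2$, and $\phi_2$ the corresponding restriction; here the coherence of $\psi$ in the sense of Siniora--Solecki, which gives $\psi(p^{-1})=\psi(p)^{-1}$, is what guarantees that this generated part is closed under the $\psi(p)$ and hence is a substructure on which they act as automorphisms. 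One checks that $D_1\subseteq D_2$---because every point of $D_1$ is already reachable from $C_1\subseteq C_2$ using that $\psi(p)$ extends $\phi_1(p)$---and that restricting the group isomorphism witnessing the coherence of $(E,\psi)$ with $(D_1,\phi_1)$ yields the coherence of $(D_2,\phi_2)$ with $(D_1,\phi_1)$. Verifying these closure and coherence-retention facts is the technical heart of the argument; once they are in place, the bookkeeping ensures every requirement is eventually met and $U$ is ultraextensive.
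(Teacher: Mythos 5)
Your construction is, in outline, the paper's own: realize the extension as an increasing union of finite structures, use free amalgamation (legitimate precisely because every $T\in\mathcal{T}$ is a Gaifman clique) to glue each new finite piece on while preserving $\mathcal{T}$-freeness, use Theorem~\ref{thmcoherent} to meet coherence requirements, and treat the first assertion as the degenerate case $\mathcal{T}=\varnothing$. The differences in bookkeeping (one requirement per stage, with ultrahomogeneity recovered by one-point extensions plus back-and-forth, where the paper instead maintains a single coherent chain $(D_n,\phi_n)$ and takes $\phi(p)=\bigcup_m\phi_m(p)$, batching all coherence requirements over $D_n$ into one structure $Z_n$) are organizational. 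The genuine problem is in your step (iii): you cannot apply Theorem~\ref{thmcoherent} as a black box to $C_1\subseteq B=\langle D_1\cup C_2\rangle$. The proof of that theorem begins by forming the \emph{free amalgamation} of $D_1$ and the larger structure --- here $B$ --- over $C_1$, and then extends that amalgam via Proposition~\ref{HKNprop}. Since in your situation $D_1$ is already contained in $B$, the amalgam contains two disjoint copies of $D_1\setminus C_1$: the original one inside $B$, and the freshly amalgamated factor. The coherence the theorem delivers is anchored to the fresh copy: it guarantees $\psi(p)\supseteq\phi_1(p)$ only for $\phi_1(p)$ acting on that copy, and it says nothing about how $\psi(p)$ moves the original points of $D_1\setminus C_1$ sitting in $B$. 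Consequently your claims that ``$E$ and $U_n$ genuinely agree on their overlap'' in the relevant sense, that the original $D_1$ is contained in your extracted $D_2$, and that coherence with the original $(D_1,\phi_1)$ survives, are not consequences of the theorem; and requirement (iii) of Definition~\ref{uedef} concerns the original $D_1\subseteq U$, so the distinction is not cosmetic.

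The gap is fixable with the paper's tools: rerun the proof of Theorem~\ref{thmcoherent} on $B$ itself instead of quoting its statement. Because $D_1\subseteq B$, each $\phi_1(p)$ with $p\in\mathcal{P}_{C_1}$ is itself a partial isomorphism of $B$. Apply Proposition~\ref{HKNprop} to $B$ to obtain a finite $\mathcal{T}$-free $E\supseteq B$ and a multiplicative $\varphi:\mathcal{P}(B)\to\mbox{Aut}(E)$, and set $\psi(p)=\varphi(\phi_1(p))$ for $p\in\mathcal{P}_{C_1}$ and $\psi(p)=\varphi(p)$ for $p\in\mathcal{P}_B\setminus\mathcal{P}_{C_1}$. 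Then $\psi(p)\supseteq\phi_1(p)\supseteq p$ holds on the \emph{original} $D_1$, and multiplicativity of $\varphi$ on $\mbox{Aut}(D_1)\subseteq\mathcal{P}(B)$ makes $\phi_1(p)\mapsto\psi(p)$ extend to a group isomorphic embedding exactly as in the paper's proof of Theorem~\ref{thmcoherent}. With this replacement your remaining steps go through as you sketch them: glue $E$ to $U_n$ freely over $B$; let $D_2$ be the orbit of $C_2$ under $\langle\psi(\mathcal{P}_{C_2})\rangle$, which is invariant, so the restrictions $\phi_2(p)=\psi(p)\upharpoonright D_2$ are automorphisms; $D_1\subseteq D_2$ by minimality of $(D_1,\phi_1)$; and the restriction map $K_1\to K_2$ is injective because each image of $g\in K_1$ restricts to $g$ on $D_1\subseteq D_2$. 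I note in fairness that the paper's own construction of $Z_n$ cites Theorem~\ref{thmcoherent} with a similar looseness (it glues the coherent extension along $E$, so the copy of $D'$ it extends is a fresh one, hidden behind the convention that ``extension'' may mean ``contains an isomorphic copy''); your idea of enlarging the base to $B$ is exactly what anchors coherence to the original $D_1$, but it must be implemented through Proposition~\ref{HKNprop} rather than through the statement of Theorem~\ref{thmcoherent}.
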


\begin{thm} 
If $U$ is an ultraextensive $\mathcal{L}$-structure then every countable substructure $C\subseteq U$ can be extended to a countable ultraextensive substructure $D\subseteq U$.
\end{thm}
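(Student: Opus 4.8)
The plan is to build $D$ as the union of an increasing $\omega$-chain of countable substructures of $U$ by a closure argument, arranging that $D$ satisfies the two existence clauses in the definition of ultraextensiveness, and then to deduce ultrahomogeneity of $D$ essentially for free. So $D$ will be ultrahomogeneous not because I force it directly, but as a consequence of closing $D$ under finite HL-extensions.

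First I would set $D_0 = C$ and construct $C = D_0 \subseteq D_1 \subseteq \cdots \subseteq U$ with each $D_n$ a countable substructure of $U$. Passing from $D_n$ to $D_{n+1}$, I close off two families of tasks whose data already lie in $D_n$. For the HL-extension clause, for every finite $C' \subseteq D_n$ I invoke ultraextensiveness of $U$ to pick a finite HL-extension $(D', \phi)$ of $C'$ with $D' \subseteq U$, and I adjoin $D'$ to $D_{n+1}$. For the coherence clause, for every pair of finite $C_1 \subseteq C_2 \subseteq D_n$ and every finite minimal HL-extension $(D_1, \phi_1)$ of $C_1$ with $D_1 \subseteq D_n$, I invoke the third clause of ultraextensiveness of $U$ to pick a coherent finite minimal HL-extension $(D_2, \phi_2)$ of $C_2$ with $D_2 \subseteq U$, and I adjoin $D_2$. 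Since $D_n$ is countable there are only countably many such tasks (countably many finite subsets, each with finitely many candidate HL-maps into a finite target), and each contributes finitely many new points, so $D_{n+1}$ is again a countable substructure of $U$; hence $D = \bigcup_n D_n$ is a countable substructure of $U$ containing $C$. Because every finite subset of $D$ and every finite piece of HL-data over $D$ already appears in some $D_n$, the extensions produced at stage $n+1$ lie in $D_{n+1} \subseteq D$, and being substructures of $U$ they are genuine substructures of $D$; thus $D$ satisfies both existence clauses.

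The crux is to show that the HL-extension clause already forces $D$ to be ultrahomogeneous, so that ultrahomogeneity needs no separate attention in the construction. Given finite substructures $A, B \subseteq D$ and an isomorphism $f : A \to B$, I would extend $f$ to an automorphism of $D$ by back-and-forth along an enumeration $D = \{d_0, d_1, \dots\}$. In the forth step, with a current finite partial isomorphism $g : A' \to B'$ of $D$ and a target $d_n$, I set $C' = A' \cup B' \cup \{d_n\}$, use the HL-extension clause of $D$ to find a finite HL-extension $(D', \phi)$ of $C'$ with $D' \subseteq D$, and observe that $g$ is a partial automorphism of $C'$, so $\phi(g) \in \mathrm{Aut}(D')$ extends $g$. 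The restriction $\phi(g)|_{C'}$ is then a finite partial isomorphism of $D$, since its range lies in $D' \subseteq D$; it extends $g$ and has $d_n$ in its domain. The back step is symmetric, applied to $g^{-1}$ to force $d_n$ into the range. The union of the resulting chain is an automorphism of $D$ extending $f$, which proves ultrahomogeneity; together with the two existence clauses this gives that $D$ is ultraextensive.

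The only nonroutine point is this last observation: closure under finite HL-extensions inside $D$ supplies exactly the one-point amalgamation moves needed to run the back-and-forth, so ultrahomogeneity comes at no extra cost. The closure construction itself is routine bookkeeping, the only mild care being to enumerate the countably many HL-data tasks at each stage and to note that the required extensions, being asked only to \emph{exist} inside $D$, are monotone and hence never damaged by subsequently adjoining more points.
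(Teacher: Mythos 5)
Your proof is correct, but the mechanism by which you obtain ultrahomogeneity is genuinely different from the paper's. The paper proves this theorem by rerunning the construction of Theorem~\ref{uecountable} inside $U$: it builds an increasing chain of \emph{finite} structures $C_n\subseteq D_n\subseteq Z_n\subseteq C_{n+1}$, where each $(D_n,\phi_n)$ is a finite \emph{minimal} HL-extension of $C_n$ chosen, via clause (iii) of Definition~\ref{uedef} applied in $U$, to be coherent with $(D_{n-1},\phi_{n-1})$, and $Z_n$ absorbs the coherence tasks (with unions inside $U$ replacing the free amalgamations of Theorem~\ref{uecountable}). Ultrahomogeneity of $D=\bigcup_n D_n$ then falls out of coherence: for $p\in\mathcal{P}_{C_n}$ the automorphisms $\phi_m(p)$, $m\ge n$, form an increasing chain, and $\bigcup_{m\ge n}\phi_m(p)$ is an automorphism of $D$ extending $p$. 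You instead run a plain closure construction with countable stages, verify clauses (ii) and (iii) of Definition~\ref{uedef} by bookkeeping, and then derive ultrahomogeneity by back-and-forth from clause (ii) alone --- the observation that closure under finite HL-extensions supplies exactly the one-point extension moves. Both arguments are sound. Yours is more modular: it needs no distinguished coherent chain, no minimality bookkeeping for the extensions used along the way, and no care about how successive stages are amalgamated. What the paper's route buys is the coherent exhaustion $(D_n,\phi_n)$ itself: extending automorphisms are produced directly as unions of finite-stage automorphisms rather than via a back-and-forth, and this same chain structure is precisely what is reused immediately afterwards to show that $\mathrm{Aut}(U)$ has a dense locally finite subgroup, namely $\bigcup_n \mathrm{Aut}(D_n)$.

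One small point to patch in your back-and-forth: the map $\phi$ of an HL-extension is defined only on $\mathcal{P}_{C'}$, the \emph{nonidentity} partial isomorphisms of $C'$, so when the current partial isomorphism $g$ is a restriction of the identity you cannot invoke $\phi(g)$; in that case extend $g$ by the identity on $C'$ instead. This is trivial to fix and does not affect the argument.
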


\begin{thm} 
If $U$ is a countable ultraextensive $\mathcal{L}$-structure then the automorphism group of $U$ has a dense locally finite subgroup.
\end{thm}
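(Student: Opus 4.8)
The plan is to use ultraextensiveness to build an increasing, exhausting chain of finite minimal HL-extensions inside $U$, and to glue the attached automorphisms into a single subgroup $H\le\operatorname{Aut}(U)$ that is simultaneously dense and locally finite.

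First I would fix an enumeration $U=\{u_n:n\in\mathbb N\}$ and recursively construct finite substructures $C_0\subseteq C_1\subseteq\cdots$ of $U$ with finite minimal HL-extensions $(D_n,\phi_n)$, $D_n\subseteq U$, such that consecutive pairs are coherent. For the base I apply the first clause of ultraextensiveness to $C_0=\{u_0\}$ to get a finite HL-extension $(D,\phi)$ with $D\subseteq U$ and then pass to the minimal sub-extension: letting $D_0$ be the set of values $\phi(q_1)\cdots\phi(q_m)(a)$ ($q_j$ a partial isomorphism of $C_0$, $a\in C_0$), finiteness of $\operatorname{Aut}(D)$ makes each $\phi(q)^{-1}$ a positive power of $\phi(q)$, so $D_0$ is $\phi(q)$-invariant and $(D_0,\phi\restriction D_0)$ is a finite minimal HL-extension. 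At stage $n+1$ I let $C_{n+1}$ be the finite substructure on $D_n\cup\{u_{n+1}\}$ and apply the coherence clause of ultraextensiveness to $(D_n,\phi_n)$ to obtain a coherent $(D_{n+1},\phi_{n+1})$. Coherence gives $D_n\subseteq D_{n+1}$ and $C_{n+1}\subseteq D_{n+1}$, so $u_{n+1}\in D_{n+1}$ and $\bigcup_n C_n=\bigcup_n D_n=U$; it also yields $\phi_{n+1}(p)\supseteq\phi_n(p)$ for every partial isomorphism $p$ of $C_n$. Hence for a partial isomorphism $p$ of any $C_{n_0}$ the maps $\phi_n(p)$ ($n\ge n_0$) form a coherent chain of automorphisms of the $D_n$, and $\Phi(p):=\bigcup_{n\ge n_0}\phi_n(p)$ is a well-defined automorphism of $U$ extending $p$. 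Let $H$ be the subgroup of $\operatorname{Aut}(U)$ generated by all such $\Phi(p)$.

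Density is immediate: given $g\in\operatorname{Aut}(U)$ and finite $F\subseteq U$, choose $n$ with $F\cup g(F)\subseteq C_n$; since $\mathcal L$ is relational, $p:=g\restriction F$ is an isomorphism between the substructures $F$ and $g(F)$ of $C_n$, i.e.\ a partial isomorphism of $C_n$, and $\Phi(p)\in H$ extends $p$, so $\Phi(p)$ agrees with $g$ on $F$. Thus $H$ meets every basic open neighborhood and is dense. (This step uses neither minimality nor ultrahomogeneity.)

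The hard part, and the only place coherence is genuinely needed, is local finiteness. Fix generators $\Phi(p_1),\dots,\Phi(p_k)$ with each $p_i$ a partial isomorphism of some $C_N$, and for $n\ge N$ consider the restriction map $\rho_n\colon\langle\Phi(p_1),\dots,\Phi(p_k)\rangle\to\operatorname{Aut}(D_n)$ (well defined, since each $\Phi(p_i)^{\pm1}$ preserves $D_n$), sending $\Phi(p_i)\mapsto\phi_n(p_i)$. Its image lies in the finite group $\operatorname{Aut}(D_n)$. Coherence of $(D_n,\phi_n)$ with $(D_{n+1},\phi_{n+1})$ supplies a group isomorphism $\iota_n$ with $\iota_n(\phi_n(p))=\phi_{n+1}(p)$, and applying it to words in the generators gives $\iota_n\circ\rho_n=\rho_{n+1}$ on our subgroup. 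Injectivity of $\iota_n$ then forces $\ker\rho_n=\ker\rho_{n+1}$, so $\ker\rho_N=\ker\rho_n$ for all $n\ge N$. But any $\gamma\in\ker\rho_N$ acts trivially on every $D_n$ ($n\ge N$), hence on $\bigcup_{n\ge N}D_n=U$, so $\gamma=\operatorname{id}$. Therefore $\rho_N$ is injective and $\langle\Phi(p_1),\dots,\Phi(p_k)\rangle$ embeds into the finite group $\operatorname{Aut}(D_N)$, hence is finite. This proves $H$ is locally finite, completing the argument. I expect the kernel-stabilization step—leveraging that coherence makes $\phi_n(p)\mapsto\phi_{n+1}(p)$ an isomorphism rather than a mere homomorphism—to be the crux; the remaining items are routine once the exhausting coherent chain is in place.
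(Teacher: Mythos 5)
Your proof is correct and takes essentially the same route as the paper's: build an increasing coherent chain of finite minimal HL-extensions $(D_n,\phi_n)$ exhausting $U$, and realize the direct limit of the associated finite automorphism groups as a dense subgroup of $\mathrm{Aut}(U)$. Your write-up in fact supplies the details that the paper's three-line proof leaves implicit --- passing to a minimal sub-extension so that clause (iii) of ultraextensiveness can be invoked, and the kernel-stabilization argument showing that coherence (which only controls the subgroups $K_n=\langle\phi_n(\mathcal{P}_{C_n})\rangle$, not all of $\mathrm{Aut}(D_n)$) genuinely yields local finiteness.
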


The rest of the paper is organized as follows. In Section~\ref{Preliminaries} we give the characterization of finite $\mathcal{T}$-free, minimal HL-extensions. In Section~\ref{HL-property} we study the HL-property of groups and show that it is preserved under taking free products. In Section~\ref{CoherentHLextensions} we discuss coherent HL-extensions and ultraextensive structures. The results in Sections ~\ref{Preliminaries} and \ref{CoherentHLextensions} are analogous to previous work by the authors \cite{EG} on similar concepts in the context of metric spaces. 


\section{Minimal HL-Extensions\label{Preliminaries}}

\subsection{HL-extensions}

We fix some notation to be used in the rest of the paper. Throughout this paper let $\mathcal{L}$ be a finite relational language. Let $C,D$ be $\mathcal{L}$-structures. We say that $D$ is an {\it extension} of $C$ if $C$ is a substructure of $D$. Interchangeably, we use the same terminology when $D$ contains an isomorphic copy of $C$.

A {\it homomorphism} from $C$ to $D$ is a map $\pi: C\to D$ such that for every $n$-ary relation $R\in\mathcal{L}$ and every $a_1,\dots, a_n\in C$,
$$ R^C(a_1,\dots, a_n)\Rightarrow R^D(\pi(a_1),\dots,\pi(a_n)). $$
An {\it isomorphism} from $C$ to $D$ is a bijection $\pi: C\to D$ such that for every $n$-ary relation $R\in \mathcal{L}$ and every $a_1,\dots,a_n\in C$, 
$$ R^C(a_1,\dots,a_n) \iff R^D(\pi(a_1),\dots,\pi(a_n)). $$
An isomorphism from $C$ to $C$ is also called an {\it automorphism of} $C$. The set of all automorphisms of $C$ is denoted as $\mbox{Aut}(C)$. Under composition of maps, $\mbox{Aut}(C)$ becomes a group.

A {\it partial isomorphism} of $C$ is an isomorphism between two finite substructures of $C$. The set of all partial isomorphisms of $C$ is denoted as $\mathcal{P}(C)$. Although $\mathcal{P}(C)$ is not necessarily a group, it is a groupoid and for each $p\in \mathcal{P}(C)$ we can speak of the inverse map $p^{-1}$, which is still a partial isomorphism. 

If $D$ is an extension of $C$, then every partial isomorphism of $C$ is also a partial isomorphism of $D$. In symbols, we have $\mathcal{P}(C)\subseteq \mathcal{P}(D)$ if $C$ is a substructure of $D$.

If $p, q\in \mathcal{P}(C)$, we say that $q$ {\it extends} $p$, and write $p\subseteq q$, if 
$$\{ (a, p(a))\,:\, a\in\mbox{dom}(p)\}\subseteq \{ (a, q(a))\,:\, a\in\mbox{dom}(q)\}.$$

We let $1_C$ denote the identity automorphism on $C$, i.e., $1_C(a)=a$ for all $a\in C$. Let $\mathcal{P}_C$ denote the set of all $p\in \mathcal{P}(C)$ such that $p\not\subseteq 1_C$. We refer to elements of $\mathcal{P}_C$ as {\it nonidentity partial isomorphisms} of $C$. Note that if $p\in \mathcal{P}_C$ then $p^{-1}\in \mathcal{P}_C$ and $p^{-1}\neq p$.

The main concept we study in this paper is that of an HL-extension.

\begin{defn} Let $C$ be an $\mathcal{L}$-structure. An {\it HL-extension} of $C$ is a pair $(D, \phi)$, where $D$ is an extension of $C$, and $\phi: \mathcal{P}_C\to \mbox{Aut}(D)$ such that $\phi(p)$ extends $p$ for all $p\in\mathcal{P}_C$. 
\end{defn}

Note that if $(D, \phi)$ is an HL-extension of $C$ then we can always modify $\phi$ so that for all $p\in \mathcal{P}_C$, $\phi(p^{-1})=\phi(p)^{-1}$. We will tacitly assume this property for all the HL-extensions we consider.

Note that an equivalent restatement of Herwig--Lascar theorem (Theorem~\ref{thm_HL}) is that every finite $\mathcal{T}$-free $\mathcal{L}$-structure has a finite $\mathcal{T}$-free HL-extension. 

We will need the following notion of homomorphism between HL-extensions.

\begin{defn} Let $C$ be an $\mathcal{L}$-structure, and let $(D_1, \phi_1)$ and $(D_2, \phi_2)$ be both HL-extensions of $C$. A {\it homomorphism} from $(D_1, \phi_1)$ to $(D_2, \phi_2)$ is a map $\psi: D_1\to D_2$ such that $\psi$ is a homomorphism from the structure $D_1$ to $D_2$, $\psi\upharpoonright C$ is the identity map on $C$, and for all $p\in \mathcal{P}_C$, $\psi\circ\phi_1(p)=\phi_2(p)\circ \psi$. 
\end{defn}

We also define the notion of minimality for an HL-extension as follows.

\begin{defn} Let $C$ be an $\mathcal{L}$-structure and $(D,\phi)$ be an HL-extension of $C$. We say that $(D,\phi)$ is \textit{minimal} if for all $b\in D\setminus C$ there are $p_1, \dots, p_n\in \mathcal{P}_C$ and $a\in C$ such that $b=\phi(p_1)\dots \phi(p_n)(a)$. 
\end{defn}

\subsection{A canonical HL-extension} 
In this subsection we describe a canonical construction of an HL-extension that is essentially due to Herwig--Lascar \cite{HL}. In the rest of the paper let $\mathcal{T}$ be a fixed finite set of finite $\mathcal{L}$-structures.

First, note that for every finite $\mathcal{L}$-structure $C$ there is a unique partition of $C$ into substructures $\{C_i \ : \ i=1,\dots,n\}$ such that each $C_i$ is a maximal subset of $C$ satisfying that for every $a,b\in C_i$, the map that sends $a$ to $b$ (that is, the map $\{(a,b)\}$) is a partial isomorphism of $C$. In other words, we partition $C$ into maximal subsets whose elements satisfy the same unary predicates. We call this partition with a specific point from each set a \textit{natural factorization} of $C$.  That is, a natural factorization of $C$ is of the form $\{(C_i,a_i): i=1,\dots, n\}$, where each $a_i\in C_i$.

Let $C$ be a finite $\mathcal{T}$-free $\mathcal{L}$-structure. Let $\{(C_i,a_i) \ : \ i=1,\dots,n\}$ be a natural factorization of $C$. For every $1\leq i\leq n$ we define
\[
H_i=\{g\in \mathbb{F}(\mathcal{P}_C) \ : \ g(a_i)=a_i\},
\]
where $\mathbb{F}(\mathcal{P}_C)$ is the free group with the generating set $\mathcal{P}_C$ (with the convention that the inverse of $p\in\mathcal{P}_C$ in $\mathbb{F}(\mathcal{P}_C)$ coincides with $p^{-1}$). By $g(a_i)=a_i$ we mean that if $g=p_1\cdots p_m$ with $p_1,\dots, p_m\in\mathcal{P}_C$, then $p_1(\cdots( p_m(a_i))\cdots)$ is defined and $$p_1(\cdots (p_m(a_i))\cdots)=a_i.$$ 
Each $H_i$ is a subgroup of $\mathbb{F}(\mathcal{P}_C)$. 

Let $\Gamma$ be the $\mathcal{L}$-structure with domain $$ \mathbb{F}(\mathcal{P}_C)/H_1\sqcup \cdots \sqcup \mathbb{F}(\mathcal{P}_C)/H_n$$ and such that for every $m$-ary relation symbol $R\in\mathcal{L}$, we have $R^\Gamma(g_1H_{i_1},\dots,g_mH_{i_m})$ iff there are $p_1,\dots,p_m\in \mathcal{P}_C$ and $g\in \mathbb{F}(\mathcal{P}_C)$ such that $p_j(a_{i_j})$ is defined for each $j=1,\dots, m$, $(g_1H_{i_1},\dots,g_mH_{i_m})=(gp_1H_{i_1},\dots,gp_mH_{i_m})$, and $R^C(p_1(a_{i_1}),\dots,p_m(a_{i_m}))$.

Note that $C$ can be viewed as a substructure of $\Gamma$. In fact, consider the map $\pi:C\rightarrow \Gamma$ defined as
$$ \pi(a)=\left\{\begin{array}{ll} H_i, & \mbox{ if $a=a_i$,} \\ pH_i, & \mbox{ if $a\neq a_i$ and $p\in \mathcal{P}_C$ satisfies $p(a_i)=a$,}  \end{array}\right. $$
for $a\in C_i$. It is easy to see that $\pi$ is well-defined and is indeed an isomorphic embedding from $C$ into $\Gamma$. 

Given any $\gamma\in \mathbb{F}(\mathcal{P}_C)$, the map $\Phi_\gamma$ defined by $\Phi_\gamma(gH_i)=\gamma gH_i$ is an automorphism of $\Gamma$. Thus, $(\Gamma, \Phi)$ is an HL-extension of $C$ with $\Phi: \mathcal{P}_C\to \mbox{Aut}(\Gamma)$ defined as $\Phi(p)=\Phi_p$. Note that by definition, $(\Gamma,\Phi)$ is a minimal HL-extension of $C$.

Assume $C$ has a $\mathcal{T}$-free HL-extension $(D,\phi)$. Consider the map $\psi:\Gamma \rightarrow D$ defined by $\psi(gH_i)=\phi(g)(a_i)$, where $\phi(g)=\phi(p_1)\cdots\phi(p_m)$ if $g=p_1\dots p_m$. Then $\psi$ is a homomorphism. It follows that $\Gamma$ is also $\mathcal{T}$-free.

\subsection{Finite HL-extensions} Let $C$ be a finite $\mathcal{T}$-free $\mathcal{L}$-structure as before. We give a description of all finite $\mathcal{T}$-free, minimal HL-extensions of $C$. For this we first describe a finite $\mathcal{T}$-free, minimal HL-extension by replacing each group $H_i$ in the above canonical $\Gamma$ with a larger group of the form $N_iH_i$, where $N_i$ is a normal subgroup of $\mathbb{F}(\mathcal{P}_C)$ of finite index. 

Let $N_1,\dots, N_n\unlhd \mathbb{F}(\mathcal{P}_C)$ be normal subgroups of finite index. We define
$$ \Gamma_{\vec{N}}=\mathbb{F}(\mathcal{P}_C)/N_1H_1\sqcup \cdots \sqcup \mathbb{F}(\mathcal{P}_C)/N_nH_n. $$
The structure on $\Gamma_{\vec{N}}$ is defined analogously to the structure on the canonical HL-extension $\Gamma$. More precisely, to define the structure on $\Gamma_{\vec{N}}$, let $R\in \mathcal{L}$ be an $m$-ary relation symbol. Then $R^{\Gamma_{\vec{N}}}(g_1N_{i_1}H_{i_1},\dots,g_mN_{i_m}H_{i_m})$ iff there are $p_1,\dots,p_m\in \mathcal{P}_C$ and $g\in \mathbb{F}(\mathcal{P}_C)$ such that $p_j(a_{i_j})$ is defined for each $j=1,\dots, m$, 
\[
(g_1N_{i_1}H_{i_1},\dots,g_mN_{i_m}H_{i_m})=(gp_1N_{i_1}H_{i_1},\dots,gp_mN_{i_m}H_{i_m}),
\]
and $R^C(p_1(a_{i_1}),\dots,p_m(a_{i_m}))$.

Consider the map $\pi_{\vec{N}}:C\rightarrow \Gamma_{\vec{N}}$ defined by 
$$ \pi_{\vec{N}}(a)=\left\{\begin{array}{ll} N_iH_i, & \mbox{ if $a=a_i$,} \\ pN_iH_i, & \mbox{ if $a\neq a_i$ and $p\in \mathcal{P}_C$ satisfies $p(a_i)=a$,}  \end{array}\right. $$
for $a\in C_i$. Then $\pi_{\vec{N}}$ is well-defined. Under suitable assumptions (that will be discussed in Theorem \ref{minimalHLextension}), $\pi_{\vec{N}}$ becomes an isomorphic embedding. In this case $\pi_{\vec{N}}(C)$ is an isomorphic copy of $C$ as a substructure of $\Gamma_{\vec{N}}$. 

We define $\Phi_{\vec{N}}:\mathcal{P}_C\rightarrow \mbox{Aut}(\Gamma_{\vec{N}})$ by letting 
$$\Phi_{\vec{N}}(p)(gN_iH_i)=p gN_iH_i. $$
Assuming the above map $\pi_{\vec{N}}$ is an isomorphic embedding, and noting that there is a canonical surjective homomorphism from $\Gamma$ to $\Gamma_{\vec{N}}$, it follows from the minimality of $(\Gamma, \Phi)$ that $(\Gamma_{\vec{N}}, \Phi_{\vec{N}})$ is also a minimal HL-extension of $C$.  

We are now ready to describe any finite $\mathcal{T}$-free, minimal HL-extension of $C$ as a homomorphic image of some $(\Gamma_{\vec{N}}, \Phi_{\vec{N}})$, which is itself a finite $\mathcal{T}$-free, minimal HL-extension of $C$.

\begin{thm}\label{minimalHLextension}
Let $C$ be a finite $\mathcal{T}$-free $\mathcal{L}$-structure and $(D,\phi)$ be a finite $\mathcal{T}$-free, minimal HL-extension of $C$. Then, there are $N_1,\dots,N_n\unlhd \mathbb{F}(\mathcal{P}_C)$ of finite index such that $(\Gamma_{\vec{N}}, \Phi_{\vec{N}})$ is a finite $\mathcal{T}$-free, minimal HL-extension of $C$ and there is a homomorphism from $(\Gamma_{\vec{N}}, \Phi_{\vec{N}})$ onto $(D,\phi)$.
\end{thm}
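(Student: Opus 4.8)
The plan is to exploit the given HL-extension $(D,\phi)$ to manufacture the normal subgroups directly, and then to show that every required property of $(\Gamma_{\vec{N}},\Phi_{\vec{N}})$ is inherited from $D$ through a single homomorphism. First I would promote $\phi$ to a genuine group homomorphism: since we assume $\phi(p^{-1})=\phi(p)^{-1}$ and $\mathbb{F}(\mathcal{P}_C)$ is free on $\mathcal{P}_C$, the assignment $p\mapsto\phi(p)$ extends uniquely to a homomorphism $\rho:\mathbb{F}(\mathcal{P}_C)\to\mathrm{Aut}(D)$ with $\rho(p_1\cdots p_m)=\phi(p_1)\cdots\phi(p_m)$. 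Because $D$ is finite, $\mathrm{Aut}(D)$ is finite, so $K:=\ker\rho$ is a normal subgroup of finite index. I would then set $N_1=\cdots=N_n=K$; each quotient $\mathbb{F}(\mathcal{P}_C)/N_iH_i$ is then finite, so $\Gamma_{\vec{N}}$ is finite.

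The heart of the argument is the map $\bar\psi:\Gamma_{\vec{N}}\to D$, $\bar\psi(gN_iH_i)=\rho(g)(a_i)$. To see it is well-defined I would check that $N_iH_i$ lies inside the $\rho$-stabilizer $L_i=\{h:\rho(h)(a_i)=a_i\}$: we have $K\subseteq L_i$ since $K=\ker\rho$, and $H_i\subseteq L_i$ because each $\phi(p)$ extends $p$, so any $h\in H_i$ (for which the partial composition $h(a_i)$ is defined and equals $a_i$) also satisfies $\rho(h)(a_i)=a_i$; hence $N_iH_i=KH_i\subseteq L_i$, which forces $gN_iH_i=g'N_iH_i\Rightarrow\rho(g)(a_i)=\rho(g')(a_i)$. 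A direct computation from the definition of the relations on $\Gamma_{\vec{N}}$ then shows $\bar\psi$ is a structure homomorphism: if $R^{\Gamma_{\vec{N}}}$ holds of a tuple witnessed by $p_1,\dots,p_m$ and $g$, one rewrites $\bar\psi(g_jN_{i_j}H_{i_j})=\rho(g)(p_j(a_{i_j}))$ using $N_{i_j}H_{i_j}\subseteq L_{i_j}$, and then applies the automorphism $\rho(g)$ to the relation $R^C(p_1(a_{i_1}),\dots,p_m(a_{i_m}))$, which holds in $D$ since $C\subseteq D$. The same bookkeeping gives $\bar\psi\circ\Phi_{\vec{N}}(p)=\phi(p)\circ\bar\psi$ and $\bar\psi\circ\pi_{\vec{N}}=\mathrm{id}_C$, and surjectivity of $\bar\psi$ follows from the minimality of $(D,\phi)$, since every $b\in D$ has the form $\rho(g)(a_i)$ after absorbing the partial isomorphism $\{(a_i,a)\}$ into the word.

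With $\bar\psi$ in hand the remaining properties fall out almost for free, and this is the conceptual punchline I would emphasize. That $\pi_{\vec{N}}$ is an isomorphic embedding is immediate: it is injective because $\bar\psi\circ\pi_{\vec{N}}=\mathrm{id}_C$, it preserves relations by construction, and it reflects relations because $\bar\psi$ is a homomorphism and $C$ is a substructure of $D$. Consequently $(\Gamma_{\vec{N}},\Phi_{\vec{N}})$ is a genuine HL-extension, and it is minimal because every coset $gN_iH_i$ equals $\Phi_{\vec{N}}(p_1)\cdots\Phi_{\vec{N}}(p_k)(N_iH_i)$ for $g=p_1\cdots p_k$. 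Finally, $\Gamma_{\vec{N}}$ is $\mathcal{T}$-free: any homomorphism $h:T\to\Gamma_{\vec{N}}$ with $T\in\mathcal{T}$ would yield a homomorphism $\bar\psi\circ h:T\to D$, contradicting the $\mathcal{T}$-freeness of $D$. Thus $\bar\psi$ is the desired homomorphism from $(\Gamma_{\vec{N}},\Phi_{\vec{N}})$ onto $(D,\phi)$.

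I expect the main obstacle to be the well-definedness and coset computation for $\bar\psi$ — reconciling the partial (groupoid) action of $\mathcal{P}_C$ with the total group action $\rho$, and keeping track of left cosets versus stabilizers. Everything downstream, including the embedding of $C$ via $\pi_{\vec{N}}$ and the $\mathcal{T}$-freeness of $\Gamma_{\vec{N}}$, is deliberately engineered to be a formal consequence of the existence of this homomorphism into the $\mathcal{T}$-free structure $D$, so once the map $\bar\psi$ is correctly constructed the rest is routine verification.
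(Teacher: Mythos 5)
Your proposal is correct and takes essentially the same route as the paper: both obtain the $N_i$ as finite-index normal subgroups coming from the action of $\mathbb{F}(\mathcal{P}_C)$ on the finite structure $D$ (the paper uses the pointwise stabilizer of the orbit $D_i=\{\phi(g)(a_i):g\in\mathbb{F}(\mathcal{P}_C)\}$ for each $i$, while you use the single kernel $K=\ker\rho$, which is contained in each of these and works equally well), and both take the evaluation map $gN_iH_i\mapsto\phi(g)(a_i)$ as the desired surjection, verifying well-definedness via $N_iH_i$ stabilizing $a_i$, surjectivity via minimality, equivariance, and $\mathcal{T}$-freeness by composing a putative homomorphism $T\to\Gamma_{\vec{N}}$ with $\bar\psi$. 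Your one organizational refinement --- deducing injectivity and relation-reflection of $\pi_{\vec{N}}$ from the retraction identity $\bar\psi\circ\pi_{\vec{N}}=\mathrm{id}_C$ instead of the paper's direct coset computation showing $p'^{-1}pH_i\cap N_i=\emptyset$ --- is a clean shortcut but not a different method.
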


\begin{proof}
For each $i=1,\dots, n$, let $D_i=\{\phi(g)(a_i): g\in \mathbb{F}(\mathcal{P}_C)\}$. We define
\[
N_i=\{g\in \mathbb{F}(\mathcal{P}_C)  : \phi(g)(a)=a \text{ for every } a\in D_i \}.
\]
Then $N_i\unlhd \mathbb{F}(\mathcal{P}_C)$. Since $D$ is finite, each $N_i$ is of finite index. 

Let $\Gamma_{\vec{N}}$ and $\pi_{\vec{N}}: C\to \Gamma_{\vec{N}}$ be defined as above. We claim that $\pi_{\vec{N}}$ is an isomorphic embedding. To see this, let $a, a'\in C_i$ with $a\neq a'$. Let $p, p'\in\mathcal{P}_C$ with $p(a_i)=a$ and $p'(a_i)=a'$. We show that $p'^{-1}pH_i\cap N_i=\emptyset$, which implies $pN_iH_i\neq p'N_iH_i$. For this, let $g\in H_i$. Since
$$\phi(p'^{-1}pg)(a_i)=\phi(p'^{-1})\phi(p)\phi(g)(a_i)=\phi(p')^{-1}p(a_i)\neq a_i, $$
we have that $p'^{-1}pH_i\cap N_i=\emptyset$. This shows that $\pi_{\vec{N}}$ is injective. It is easy to see that $\pi_{\vec{N}}$ is an isomorphism between the structures $C$ and $\pi_{\vec{N}}(C)$.

Now we define $\psi_{\vec{N}}:\Gamma_{\vec{N}}\rightarrow D$ by $\psi_{\vec{N}}(gN_iH_i)=\phi(g)(a_i)$. Note that $\psi_{\vec{N}}$ is well-defined since if $g_1^{-1}g_2\in N_iH_i$ then by definition of $N_i,H_i$ we have $\phi(g_1^{-1}g_2)(a_i)=a_i$ and therefore $\phi(g_1)(a_i)=\phi(g_2)(a_i)$. $\psi_{\vec{N}}$ is onto since $D$ is minimal. It is also easy to verify that $\psi_{\vec{N}}$ is a homomorphism. It follows that $\Gamma_{\vec{N}}$ is $\mathcal{T}$-free, and thus $(\Gamma_{\vec{N}}, \Phi_{\vec{N}})$ is a finite $\mathcal{T}$-free, minimal HL-extension of $C$.

Finally, it is routine to check that for every $p\in \mathcal{P}_C$, $\phi(p)\circ\psi_{\vec{N}}=\psi_{\vec{N}}\circ \Phi_{\vec{N}}(p)$. Thus $\psi_{\vec{N}}$ is a homomorphism from $(\Gamma_{\vec{N}}, \Phi_{\vec{N}})$ onto $(D, \phi)$.
\end{proof}

\section{The HL-Property of a Group\label{HL-property}}
In this section we consider a property for a group $G$ analogous to the existence of HL-extensions for free groups. 

\subsection{The HL-property}
First, we need the following definitions.
\begin{defn}[Herwig--Lascar \cite{HL}]
Let $G$ be a group and let $H_1,\dots,H_n\leq G$. A \textit{left system} of equations on $H_1,\dots,H_n$ is a finite set of equations with variables $x_1,\dots,x_m$ and constants $g_1,\dots,g_l$ such that each equation is of the form 
\[
x_iH_j=g_kH_j \text{ or } x_iH_j=x_rg_kH_j
\]
where $1\leq i,r\leq m$, $1\leq k\leq l$ and $1\leq j\leq n$. 
\end{defn} 

\begin{defn}
Let $G$ be a group. We say that $G$ has the \textit{HL-property} if for every finitely generated $H_1,\dots,H_n\leq G$ and left system of equations on $H_1,\dots,H_n$ that does not have a solution, there exist normal subgroups of finite index $N_1,\dots,N_n\unlhd G$ such that the same left system of equations on $N_1H_1,\dots,N_nH_n$ does not have a solution.
\end{defn}

By results of \cite{HL}, Section $3$, the Herwig--Lascar theorem (Theorem~\ref{thm_HL}) implies the HL-property for all free groups with finitely many generators. Our results below will imply that they are actually equivalent.

Recall that we say a group $G$ has the {\it RZ-property} if for any finitely generated subgroups $H_1, \dots, H_n\leq G$, $H_1\cdots H_n$ is closed in the profinite topology of $G$. Equivalently, a group $G$ has the RZ-property iff for any finitely generated $H_1,\dots,H_n\leq G$ and $g\notin H_1\cdots H_n$ there exist a normal subgroup $N\unlhd G$ of finite index, such that $gN\cap H_1\cdots H_n=\emptyset$. Ribes--Zalesskii \cite{RZ} proved the RZ-property for free groups with finitely many generators. As noted in \cite{HL}, the HL-property is a strengthening of the RZ-property, and therefore the Herwig--Lascar theorem is a strengthening of the Ribes--Zalesskii result. 

Rosendal in \cite{R} considered the RZ-property and showed that it is equivalent to a statement about extensions of partial isometries for finite metric spaces which he called \textit{finite approximability}. Earlier, Coulbois \cite{C} gave a characterization of the RZ-property in terms of extensions of partial isomorphisms of finite structures, and used it to show that the RZ-property is closed under taking finite free products. Below we give a characterization of the HL-property also in terms of extensions of partial isomorphisms of finite structures. Our characterization is analogous to Rosendal's notion of finite approximability. 

To state the theorem, we need the following notions.  Let $G$ be a group acting on sets $X$ and $Y$ and let $A\subseteq X$ and $F\subseteq G$ be arbitrary subsets. An \textit{$F$-map} from $A$ to $Y$ is a function $\pi: A\to Y$ such that for all $g\in F$ and $x\in A$, if $g(x)\in A$, then $\pi(g(x))=g(\pi(x))$. Moreoever, if $X$ and $Y$ are $\mathcal{L}$-structures, then $\pi$ is called an \textit{$F$-embedding} if $\pi$ is an injective $F$-map that is an isomorphism between $A$ and $\pi(A)$. 

An $\mathcal{L}$-structure $C$ is called a \textit{Gaifman clique} if for every $a, b\in C$ there is a relation symbol $R\in \mathcal{L}$ with arity $m\geq 2$ and $c_1,\dots, c_m\in C$ with $a, b\in \{c_1, \dots, c_m\}$ and $R^C(c_1,\dots, c_m)$. 

\begin{thm}\label{HLproperty}
Let $G$ be a group. Then the following are equivalent:
\begin{enumerate}
\item[\rm (i)] $G$ has the HL-property;
\item[\rm (ii)] Let $\mathcal{L}$ be a finite relational language with unary relation symbols $S_1,\dots, S_n\in\mathcal{L}$. Let $\mathcal{T}$ be a finite set of finite $\mathcal{L}$-structures. Let $D$ be a $\mathcal{T}$-free $\mathcal{L}$-structure such that $\{S_1^D, \dots, S_n^D\}$ is a partition of the domain of $D$. Let $C$ be a finite substructure of $D$. Let $F$ be a finite subset of $G$. Suppose that $G$ acts faithfully by isomorphisms on $D$ and that $G$ acts transitively on each $S_i^D$ for $i=1,\dots, n$. Then there exists a finite $\mathcal{T}$-free $\mathcal{L}$-structure $D'$ on which $G$ acts by isomorphisms, and an $F$-embedding from $C$ to $D'$. 
\item[\rm (iii)] Clause {\rm (ii)} with the additional assumption that every structure $T\in\mathcal{T}$ is a Gaifman clique.
\end{enumerate}
\end{thm}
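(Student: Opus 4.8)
The plan is to prove the cycle (i) $\Rightarrow$ (ii) $\Rightarrow$ (iii) $\Rightarrow$ (i). The middle implication needs no work: (iii) is exactly the instance of (ii) obtained by restricting attention to families $\mathcal{T}$ all of whose members are Gaifman cliques, so (ii) formally contains (iii). The content therefore lies in (i) $\Rightarrow$ (ii) and (iii) $\Rightarrow$ (i), and in both the engine is a dictionary between left systems of equations over $G$ and homomorphisms (or embeddings) of coset structures. For (i) $\Rightarrow$ (ii) I would first coordinatize $D$: choosing a base point $a_i\in S_i^D$ and setting $H_i^0=\mbox{Stab}_G(a_i)$, transitivity of the action on $S_i^D$ identifies $D$ with $\bigsqcup_i G/H_i^0$, with $G$ acting by left translation and each relation pulled back from $D$. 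Since the HL-property speaks only of finitely generated subgroups, I would replace each $H_i^0$ by the subgroup $H_i\le H_i^0$ generated by the finitely many elements $g_{i,l'}^{-1}g\,g_{i,l}$ with $g\in F$ and $c_{i,l},c_{i,l'}\in C\cap S_i^D$ satisfying $g\cdot c_{i,l}=c_{i,l'}$, where $c_{i,l}=g_{i,l}a_i$. The structure $\tilde D=\bigsqcup_i G/H_i$ with $R^{\tilde D}(g_1H_{i_1},\dots)\iff R^D(g_1a_{i_1},\dots)$ (well defined since $H_i\le H_i^0$) maps onto $D$ by a relation-reflecting homomorphism, so $\tilde D$ is still $\mathcal{T}$-free, and $c_{i,l}\mapsto g_{i,l}H_i$ is an $F$-embedding of $C$ into $\tilde D$; all that remains is to collapse the infinite $\tilde D$ to a finite quotient.

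This is where the HL-property is used. Mimicking the construction of $\Gamma_{\vec N}$, for finite-index normal $N_1,\dots,N_n\unlhd G$ I would equip $D'=\bigsqcup_i G/N_iH_i$ with the pushed-forward relations. Each way $D'$ could fail the conclusion — a homomorphism from some $T\in\mathcal{T}$ into $D'$, a collapse $g_{i,l}N_iH_i=g_{i,l'}N_iH_i$ of two points of $C$, or a relation of $D'$ holding on the image of $C$ but not in $C$ — unwinds, through the existential definition of the relations of $D'$, into the solvability over $N_1H_1,\dots,N_nH_n$ of an explicit left system on $H_1,\dots,H_n$. Each such system is unsolvable over $H_1,\dots,H_n$ themselves, precisely because $\tilde D$ is $\mathcal{T}$-free and the embedding into $\tilde D$ reflects relations (injectivity uses that the HL-property implies the RZ-property). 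Applying the HL-property to each of the finitely many systems and intersecting the resulting normal subgroups yields a single $\vec N$ over which all of them remain unsolvable, since refining cosets never restores a solution; the finite structure $D'$ is then $\mathcal{T}$-free and receives the desired $F$-embedding.

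For (iii) $\Rightarrow$ (i) I would run this dictionary backwards. Given finitely generated $H_1,\dots,H_n\le G$ and a left system $\Sigma$ with no solution, form $D=\bigsqcup_i G/H_i$ (adjoining a free sort $G/\{1\}$, whose extra trivial subgroup is harmless, to make the action faithful), let $C\subseteq D$ be the finite substructure recording the cosets of the constants of $\Sigma$, and let $F$ contain those constants. The heart of this direction is to manufacture the language and the family $\mathcal{T}$ so that a homomorphism from a member of $\mathcal{T}$ into any coset structure $\bigsqcup_i G/M_iH_i$ is tantamount to a solution of $\Sigma$ over $M_1H_1,\dots,M_nH_n$; unsolvability of $\Sigma$ over the $H_i$ then says exactly that $D$ is $\mathcal{T}$-free. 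Feeding this into (iii) produces a finite $\mathcal{T}$-free $D'$ carrying a $G$-action by isomorphisms together with an $F$-embedding $\pi:C\to D'$. Taking $N_i$ to be the kernel of $G\to\mbox{Sym}(D')$ — normal of finite index since $D'$ is finite — any solution of $\Sigma$ over $N_1H_1,\dots,N_nH_n$ could be transported through $\pi$ and the $G$-action into a homomorphism from some $T\in\mathcal{T}$ to $D'$, contradicting $\mathcal{T}$-freeness; hence no such solution exists, which is the HL-property.

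The hard part, in both directions, is making this dictionary exact, and for (iii) $\Rightarrow$ (i) it is constrained further by the Gaifman clique hypothesis: the patterns $T$ encoding solutions of $\Sigma$ must be designed so that every pair of their vertices lies in a common tuple of some relation. I would arrange this by introducing one relation symbol of arity at least two for each equation-type $x_iH_j=g_kH_j$ and $x_iH_j=x_rg_kH_j$, linking the corresponding coset-coordinates, so that a solution pattern becomes a clique. Verifying that this encoding is faithful in both directions — that it creates no spurious solutions, keeps each $T$ a Gaifman clique, and that the existentially defined relations on the quotient $D'$ in the forward direction match precisely the left systems extracted from the failure conditions — is the delicate, if essentially bookkeeping, core of the argument.
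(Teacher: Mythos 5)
You follow the paper's general strategy (a dictionary between left systems of equations and coset structures, run in both directions), but both nontrivial implications have genuine gaps.

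In (i)$\Rightarrow$(ii), you define $\tilde D=\bigsqcup_i G/H_i$ by pulling back the \emph{full} structure of $D$, i.e.\ $R^{\tilde D}(g_1H_{i_1},\dots,g_mH_{i_m})\iff R^D(g_1a_{i_1},\dots,g_ma_{i_m})$, and then push these relations forward to $D'=\bigsqcup_i G/N_iH_i$. With that definition, ``$R^{D'}$ holds on the image of $C$ but $R^C$ fails'' means: there exist $u_j\in N_{i_j}H_{i_j}$ with $R^D(g_{i_1,l_1}u_1a_{i_1},\dots,g_{i_m,l_m}u_ma_{i_m})$. Since $R^D$ is an arbitrary $G$-invariant relation, \emph{not} generated by $R^C$, this is an infinite disjunction over $D$ and is not expressible as finitely many left systems on $H_1,\dots,H_n$ with finitely many constants, so the HL-property gives no purchase on it; the same problem afflicts $\mathcal{T}$-freeness of $D'$. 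In fact your construction can fail for \emph{every} choice of $\vec N$: take $G=F_2$, a single sort $D=G$ with the left-translation action, $\mathcal{T}=\emptyset$, $C=\{1,v\}$, and $R^D=\{(g,gw): g\in G,\ w\in W\}$, where $W$ is chosen with $1,v,v^{-1}\notin W$ but $v$ in the profinite closure of $W$ (e.g.\ $W=\{vw_k : k\ge 1\}$ with $w_k\in N_k\setminus\{1,v^{-1},v^{-2}\}$ and $N_k$ the intersection of all normal subgroups of index at most $k$). Here $H_1=\{1\}$, and for every finite-index normal $N$ one has $v\in NWN$, so the pushed-forward relation holds at the pair $(N,vN)$ while $R^C=\emptyset$: no finite quotient of your $\tilde D$ reflects relations on $C$. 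The paper avoids exactly this by first \emph{minimizing} the structure on $D$ --- replacing $R^D$ by the set of $G$-translates of tuples satisfying $R^C$, which preserves $\mathcal{T}$-freeness, the isomorphic action, and $C$ being a substructure --- so that the relations of its $\Gamma$ and $D'$ are defined existentially from the finite data $C$ and $K$, and the three failure conditions (its (C1)--(C3)) genuinely become finitely many left systems. This minimization step is essential and is missing from your argument.

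In (iii)$\Rightarrow$(i), your device for meeting the Gaifman-clique hypothesis --- one relation symbol per equation type, linking the coset-coordinates of that equation --- does not work: two variables that never occur in a common equation yield vertices of $T$ lying in no common relation tuple, so $T$ is not a Gaifman clique in general. The paper's fix is blunt but necessary: it adds a binary symbol $U$ interpreted as $U^D=D\times D$ and $U^T=T\times T$, which makes $T$ a Gaifman clique at no cost to the encoding. Separately, your final transport step (sending a solution of $\Sigma$ over $N_1H_1,\dots,N_nH_n$ to a homomorphism $T\to D'$ via $gN_iH_i\mapsto g(\pi(H_i))$) needs this map to be well defined, i.e.\ $H_i$ must fix $\pi(H_i)$; since $\pi$ is only an $F$-embedding, this requires $F$ to contain a generating set of each $H_i$, not merely the constants of $\Sigma$ as you stipulate. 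That omission is easily repaired, but the minimization gap and the Gaifman-clique gap are real.
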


The next two subsections are devoted to a proof of Theorem~\ref{HLproperty}. We will show (i)$\Rightarrow$(ii)$\Rightarrow$(iii)$\Rightarrow$(i). Since (ii)$\Rightarrow$(iii) is obvious, we focus on showing (i)$\Rightarrow$(ii) and (iii)$\Rightarrow$(i).

\subsection{Proof of Theorem~\ref{HLproperty} (i)$\Rightarrow$(ii)}

We assume $G$ has the HL-property. Let $C\subseteq D$ be $\mathcal{T}$-free $\mathcal{L}$-structures, where $C$ is finite. For $1\leq i\leq n$, let $D_i=S_i^D$ and $C_i=S_i^C$. Then $\{D_i: 1\leq i\leq n\}$ is a partition of $D$ and $\{C_i:1\leq i\leq n\}$ is a partition of $C$. Without loss of generality, assume $D_i\neq\emptyset$ for every $1\leq i\leq n$. Then, by extending $C$, we may assume that $C_i\neq \emptyset$ for every $1\leq i\leq n$. Let $\{(C_i,a_i): 1\leq i\leq n\}$ be a natural factorization of $C$. Since $G$ acts transitively on each $D_i$, we have $D_i=G(a_i)$. By minimizing the structure on $D$, we may also assume that for any $m$-ary relation symbol $R\in\mathcal{L}$ and for any $d_1,\dots, d_m\in D$, we have $R^D(d_1,\dots,d_m)$ iff there are $c_1,\dots,c_m\in C$ and $g\in G$ such that $R^C(c_1,\dots,c_m)$ and $(d_1,\dots,d_m)=(g(c_1),\dots,g(c_m))$. 

Define $\rho: G\to \mathcal{P}(C)$ by letting, for any $g\in G$ and $c\in C$, $\rho(g)(c)=g(c)$, if $g(c)\in C$; $\rho(g)(c)$ is undefined otherwise. Since $G$ acts by isomorphisms on $D$, if $c\in C_i$ for some $1\leq i\leq n$ and $\rho(g)(c)$ is defined, then $\rho(g)(c)\in C_i$. Since $C$ is finite, the set $\rho(G)\cap \mathcal{P}_C=\{\rho(g)\in \mathcal{P}_C: g\in G\}$ is finite. 

Let $F\subseteq G$ be finite. Since the action of $G$ on $D$ is faithful, by extending $C$ with finitely many points, we may assume that $\rho(F\setminus\{1_C\})\subseteq \mathcal{P}_C$. Pick a finite $K\subseteq G$ such that $F\subseteq K$, $K^{-1}=K$ and $\rho(K\setminus\{1_C\})=\rho(G\setminus\{1_C\})\cap \mathcal{P}_C$. Define
\[
H_i=\{p_1\cdots p_l \ : \ p_1,\dots, p_l\in K \mbox{ and }\rho(p_1)(\cdots \rho(p_l)(a_i)\cdots)=a_i\}.
\]
Since $C$ and $K$ are finite, $H_i$ is finitely generated. To see this, consider an edge-labeled directed graph on $C$ defined as follows: there is an edge from $c_1$ to $c_2$ labeled by $p$ if $p\in K$ is such that $p(c_1)=c_2$. Note that this graph can have multiple edges and loops. The generators of $H_i$ are precisely those $p_1\cdots p_l$ that give a minimal cycle from $a_i$ back to $a_i$.

Let $\Gamma$ be the $\mathcal{L}$-structure with domain $G/H_1\sqcup\cdots\sqcup G/H_n$ such that for $i=1,\dots, n$, $S_i^\Gamma=G/H_i$, and for any $m$-ary relation symbol $R\in\mathcal{L}$ and for any $g_1,\dots, g_m\in G$, $R^\Gamma(g_1H_{i_1},\dots,g_mH_{i_m})$ iff there are $p_1,\dots, p_m\in K$ and $g\in G$ such that $p_j(a_{i_j})\in C_{i_j}$ for each $j=1,\dots, m$, $R^C(p_1(a_{i_1}),\cdots, p_m(a_{i_m}))$, and 
\[
(g_1H_{i_1},\dots,g_mH_{i_m})=(gp_1H_{i_1},\dots,gp_mH_{i_m}).
\]
$G$ acts on $\Gamma$ by left multiplication. Consider the map $\pi: C\to \Gamma$ defined as
$$ \pi(c)=\left\{\begin{array}{ll} H_i, & \mbox{ if $c=a_i$,} \\ pH_i, & \mbox{ if $c\in C_i, c\neq a_i$, and $p\in K$ with $p(a_i)=c$.} \end{array}\right.$$
Since $G$ acts transitively on each $D_i$, $\pi$ is well-defined. We claim that $\pi$ is an isomorphic embedding of $C$ into $\Gamma$. In fact, $\pi$ is injective because of the following fact:
\begin{enumerate}
\item[(C1)] For every $p,q\in K$ and $1\leq i\leq n$, if $p(a_i), q(a_i)\in C_i$ and $p(a_i)\neq q(a_i)$, then $p^{-1}q\notin H_i$.
\end{enumerate}
Furthermore, $\pi$ is an isomorphism between $C$ and $\pi(C)$ because of the following fact:
\begin{enumerate}
\item[(C2)] For any $p_1,\dots,p_m,q_1,\dots,q_m\in K$ such that for all $j=1,\dots, m$, $$p_j(a_{i_j}), q_j(a_{i_j})\in C_{i_j}$$ for some $1\leq i_1,\dots, i_m\leq n$, if $$R^C(p_1(a_{i_1}),\dots p_m(a_{i_m})) \mbox{ and } \neg R^C(q_1(a_{i_1}),\dots q_m(a_{i_m})),$$ then there does not exist $g\in G$ such that 
$$(p_1H_{i_1},\dots,p_mH_{i_m})=(gq_1H_{i_1},\dots,gq_mH_{i_m}).$$
\end{enumerate}
(C2) is true since otherwise in $D$ we would have 
$$R^D(p_1(a_{i_1}),\dots, p_m(a_{i_m})) \mbox{ and } \neg R^D(q_1(a_{i_1}),\dots, q_m(a_{i_m}))$$ and yet $(p_1(a_{i_1}),\dots, p_m(a_{i_m}))=(gp_1(a_{i_1}),\dots, gp_m(a_{i_m}))$, violating that $g$ is an isomorphism of the structure $D$.

Consider the map $\psi:\Gamma\to D$ defined by $\psi(gH_i)=g(a_i)$. Then $\psi$ is a homomorphism from the structure $\Gamma$ onto the structure $D$. 
Since $D$ is $\mathcal{T}$-free, so is $\Gamma$. Thus, we also have the following
\begin{enumerate}
\item[(C3)] For every structure $T\in \mathcal{T}$ there is no homomorphism from $T$ into $\Gamma$.
\end{enumerate}

Next we demonstrate that conditions (C1)--(C3) can all be equivalently expressed as certain left systems of equations on $H_1,\dots, H_n$ not having solutions. To do this, we first establish some general lemmas.

\begin{lem}\label{lem:leftsystemC1} Let $\mathcal{G}$ be a group and $\mathcal{H}\leq \mathcal{G}$. For any $\gamma, \eta\in \mathcal{G}$, $\gamma^{-1}\eta\notin \mathcal{H}$ iff the following left system with variable $x$ does not have a solution
\begin{equation}\label{leftsystemC1} \begin{array}{l} x\mathcal{H}=\gamma\mathcal{H} \\ x\mathcal{H}=\eta\mathcal{H} \end{array} \end{equation}
\end{lem}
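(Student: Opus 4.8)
The plan is to reduce everything to the elementary characterization of when two left cosets of a subgroup coincide, and to show that the left system \eqref{leftsystemC1} is solvable precisely when $\gamma^{-1}\eta\in\mathcal{H}$. The lemma is then just the contrapositive of this equivalence. The key observation is that a solution to the system is a \emph{single} element $x\in\mathcal{G}$ that must satisfy both equations at once, so the two right-hand cosets $\gamma\mathcal{H}$ and $\eta\mathcal{H}$ are forced to be equal.

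For the forward direction I would suppose $x\in\mathcal{G}$ is a solution. From $x\mathcal{H}=\gamma\mathcal{H}$ and $x\mathcal{H}=\eta\mathcal{H}$ one gets $\gamma\mathcal{H}=\eta\mathcal{H}$, and the standard fact that two left cosets satisfy $a\mathcal{H}=b\mathcal{H}$ if and only if $a^{-1}b\in\mathcal{H}$ gives $\gamma^{-1}\eta\in\mathcal{H}$. Conversely, assuming $\gamma^{-1}\eta\in\mathcal{H}$, the same fact yields $\gamma\mathcal{H}=\eta\mathcal{H}$, and then $x:=\gamma$ visibly satisfies both equations (since $\gamma\mathcal{H}=\gamma\mathcal{H}$ and $\gamma\mathcal{H}=\eta\mathcal{H}$), so the system has a solution.

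Combining the two directions shows that the system is solvable exactly when $\gamma^{-1}\eta\in\mathcal{H}$, hence unsolvable exactly when $\gamma^{-1}\eta\notin\mathcal{H}$, which is the claim. There is no genuine obstacle here: the entire argument rests only on the coset-equality criterion, and the lemma is meant as a bookkeeping device that recasts the injectivity condition (C1) as the failure of a left system of equations, in the precise form demanded by the definition of the HL-property. The analogous treatment of (C2) and (C3) will require more variables and equations, but this first step fixes the template they all follow.
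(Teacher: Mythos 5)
Your proof is correct and follows essentially the same route as the paper: both reduce the lemma to the observation that the system \eqref{leftsystemC1} is solvable iff $\gamma\mathcal{H}=\eta\mathcal{H}$, which by the standard coset-equality criterion is equivalent to $\gamma^{-1}\eta\in\mathcal{H}$. Your write-up merely makes explicit (by exhibiting $x:=\gamma$ as a solution) what the paper dismisses as obvious.
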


\begin{proof} It is equivalent to state the lemma as $\gamma^{-1}\eta\in \mathcal{H}$ iff the left system (\ref{leftsystemC1}) has a solution. Now it is obvious that (\ref{leftsystemC1}) has a solution iff $\gamma\mathcal{H}=\eta\mathcal{H}$, which is equivalent to $\gamma^{-1}\eta\in \mathcal{H}$.
\end{proof}

\begin{lem}\label{lem:leftsystemC2} Let $\mathcal{G}$ be a group and $\mathcal{H}_1, \mathcal{H}_m \leq\mathcal{G}$. For any $\gamma_1, \dots, \gamma_m, \eta_1, \dots, \eta_m\in \mathcal{G}$, the following are equivalent:
\begin{enumerate}
\item[(i)] There does not exist $g\in\mathcal{G}$ such that $(\gamma_1\mathcal{H}_1, \dots, \gamma_m\mathcal{H}_m)=(g\eta_1\mathcal{H}_1, \dots, g\eta_m\mathcal{H}_m)$;
\item[(ii)] The following left system with variables $x, x_1,\dots, x_m$ does not have a solution
\begin{equation}\label{leftsystemC2} \begin{array}{l} x_1\mathcal{H}_1=\gamma_1\mathcal{H}_1 \\ x_1\mathcal{H}_1=x\eta_1\mathcal{H}_1 \\ \cdots\cdots \\ x_m\mathcal{H}_m=\gamma_m\mathcal{H}_m \\ x_m\mathcal{H}_m=x\eta_m\mathcal{H}_m\end{array} \end{equation}
\end{enumerate}
\end{lem}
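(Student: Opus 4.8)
The plan is to follow the strategy used in Lemma~\ref{lem:leftsystemC1} and prove the equivalence by negating both clauses: I will show that (i) fails if and only if (ii) fails, i.e.\ that there exists $g\in\mathcal{G}$ with $(\gamma_1\mathcal{H}_1,\dots,\gamma_m\mathcal{H}_m)=(g\eta_1\mathcal{H}_1,\dots,g\eta_m\mathcal{H}_m)$ if and only if the left system (\ref{leftsystemC2}) admits a solution. Both directions amount to a short chase through cosets, with the auxiliary variables $x_1,\dots,x_m$ serving only as bookkeeping.

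For the forward direction, I would assume such a $g$ exists, so that $\gamma_j\mathcal{H}_j=g\eta_j\mathcal{H}_j$ for every $j=1,\dots,m$, and then set $x=g$ and $x_j=\gamma_j$ for each $j$. Under this assignment the equations $x_j\mathcal{H}_j=\gamma_j\mathcal{H}_j$ hold trivially, while the equations $x_j\mathcal{H}_j=x\eta_j\mathcal{H}_j$ become precisely $\gamma_j\mathcal{H}_j=g\eta_j\mathcal{H}_j$, which hold by the choice of $g$. Hence this is a solution of (\ref{leftsystemC2}).

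For the reverse direction, I would take a solution $(\,x,x_1,\dots,x_m\,)=(\,g,\xi_1,\dots,\xi_m\,)$ of (\ref{leftsystemC2}). For each $j$ the two equations in which $x_j$ occurs yield $\gamma_j\mathcal{H}_j=\xi_j\mathcal{H}_j=g\eta_j\mathcal{H}_j$, and therefore $\gamma_j\mathcal{H}_j=g\eta_j\mathcal{H}_j$ for all $j$. This says exactly that $g$ witnesses the tuple equality negating (i). Combining the two directions gives the equivalence of the negations, hence of (i) and (ii).

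The content is elementary and there is no genuine mathematical obstacle; the one point that requires care is purely syntactic. I must check that every equation I write down has one of the two forms $x_i\mathcal{H}_j=g_k\mathcal{H}_j$ or $x_i\mathcal{H}_j=x_r g_k\mathcal{H}_j$ permitted in the definition of a left system. This is precisely the reason the variables $x_1,\dots,x_m$ are present: the constraint $\gamma_j\mathcal{H}_j=x\eta_j\mathcal{H}_j$ cannot be written directly, since its left-hand side is a constant coset rather than a term of the form $x_i\mathcal{H}_j$, so it must be routed through $x_j$ via the admissible pair $x_j\mathcal{H}_j=\gamma_j\mathcal{H}_j$ (first form, constant $\gamma_j$) and $x_j\mathcal{H}_j=x\eta_j\mathcal{H}_j$ (second form, with $x_r=x$ and constant $\eta_j$).
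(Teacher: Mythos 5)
Your proposal is correct and follows essentially the same route as the paper: both prove the equivalence of the negations, with the witnesses $x=g$, $x_j=\gamma_j$ in one direction (the paper merely asserts the existence of such $x_j$ without naming $\gamma_j$ explicitly) and reading $g$ off the solution's value of $x$ in the other. Your added remark about why the auxiliary variables $x_j$ are syntactically necessary is a nice touch but does not change the argument.
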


\begin{proof} Again we prove the contrapositives. First, assume that there is $g\in\mathcal{G}$ such that $(\gamma_1\mathcal{H}_1, \dots, \gamma_m\mathcal{H}_m)=(g\eta_1\mathcal{H}_1, \dots, g\eta_m\mathcal{H}_m)$. Thus we have $m$ equations
\begin{equation*} \begin{array}{l} \gamma_1\mathcal{H}_1=g\eta_1\mathcal{H}_1 \\
\dots\dots \\
\gamma_m\mathcal{H}_m=g\eta_m\mathcal{H}_m
\end{array}
\end{equation*}
Each equation, which is of the form $\gamma_i\mathcal{H}_i=g\eta_i\mathcal{H}_i$, is equivalent to there existing $x_i$ such that
\begin{equation*}\begin{array}{l} x_i\mathcal{H}_i=\gamma_i\mathcal{H}_i \\ x_i\mathcal{H}=g\eta_i\mathcal{H}_i,\end{array} \end{equation*}
similar to the proof of Lemma~\ref{lem:leftsystemC1}. Thus the totality of the $m$ equations is equivalent to there existing solutions for the $2m$ equations in (\ref{leftsystemC2}). Conversely, if (\ref{leftsystemC2}) has a solution, then each pair of equations involving $x_i$ give rise to an equation of the form $\gamma_i\mathcal{H}_i=x\eta_i\mathcal{H}_i$. The solution for $x$ witnesses the existence of the desired element $g\in\mathcal{G}$ in clause (i).
\end{proof}

We are now ready to argue that conditions (C1)--(C3) can be equivalently expressed as certain left systems of equations on $H_1,\dots, H_n\leq G$ not having solutions. For (C1), simply apply Lemma~\ref{lem:leftsystemC1} to the appropriate $H_i, p, q$. Then $p^{-1}q\notin H_i$ is equivalent to the following system not having a solution
\begin{equation}\label{C1eq} \begin{array}{l} xH_i=pH_i \\ xH_i=qH_i \end{array} \end{equation}
Since $K$ is finite, there are only finitely many such systems. To summarize, there are finitely many left systems on $H_1,\dots, H_n$ such that (C1) holds iff each of the left systems does not have a solution.

For (C2), apply Lemma~\ref{lem:leftsystemC2}. The left system correspondent to the condition is
\begin{equation}\label{C2eq} \begin{array}{l} x_1H_{i_1}=p_1H_{i_1} \\ x_1H_{i_1}=xq_1H_{i_1} \\ \cdots\cdots \\ x_mH_{i_m}=p_mH_{i_m} \\ x_mH_{i_m}=xq_mH_{i_m}\end{array} \end{equation}
Again, since $K$ is finite, there are only finitely many such systems, and (C2) holds iff each of these left systems does not have a solution.

For (C3), we consider any $T\in \mathcal{T}$. Enumerate the elements of $T$ as $t_1, \dots, t_l$. Introduce variables $y_1,\dots, y_l$ correspondent to $t_1,\dots, t_l$. Suppose first there is a homomorphism of $T$ into $\Gamma$. Then there are $g_1, \dots, g_l\in G$ and $1\leq i_1, \dots, i_l\leq n$ such that, for any $m$-ary relation symbol $R\in\mathcal{L}$, whenever $R^T(t_{j_1}, \dots, t_{j_m})$ where $1\leq j_1, \dots, j_m\leq l$, we have $$R^\Gamma(g_{j_1}H_{i_{j_1}}, \dots, g_{j_m}H_{i_{j_m}}).$$ Note that $R^\Gamma(g_{j_1}H_{i_{j_1}}, \dots, g_{j_m}H_{i_{j_m}})$ iff there are $p_1, \dots, p_m\in K$ and $g\in G$ such that $p_k(a_{i_{j_k}})\in C_{i_{j_k}}$ for all $k=1,\dots, m$, 
$$ R^C(p_1(a_{i_{j_1}}), \dots, p_m(a_{i_{j_m}})), $$
and
$$ (g_{j_1}H_{i_{j_1}}, \dots, g_{j_m}H_{i_{j_m}})=(gp_1H_{i_{j_1}}, \dots gp_mH_{i_{j_m}}). $$
Applying Lemma~\ref{lem:leftsystemC2}, the above statement is equivalent to the following: there are $1\leq i_1,\dots, i_l\leq n$ such that for any $m$-ary relation symbol $R\in\mathcal{L}$, whenever $R^T(t_{j_1},\dots, t_{j_m})$ with $1\leq j_1,\dots, j_m\leq l$, there are $p_1,\dots, p_m\in K$ such that $p_k(a_{i_{j_k}})\in C_{i_{j_k}}$ for all $k=1,\dots, m$, 
$$ R^C(p_1(a_{i_{j_1}}), \dots, p_m(a_{i_{j_m}})), $$
and the following left system with variables $ y_1,\dots, y_l, x, z_1,\dots, z_m$ has a solution
\begin{equation}\label{leftsystemC3}\begin{array}{l}
z_1H_{i_{j_1}}=y_{j_1}H_{i_{j_1}}\\
z_1H_{i_{j_1}}=xp_1H_{j_1} \\
\dots\dots \\
z_mH_{i_{j_m}}=y_{j_m}H_{i_{j_m}}\\
z_mH_{i_{j_m}}=xp_mH_{i_{j_m}}
\end{array}
\end{equation}
Here the variables $x, z_1,\dots, z_m$ and the left system (\ref{leftsystemC3}) are introduced for each instance of $j_1, \dots, j_m$ and $p_1,\dots, p_m\in K$ that satisfy the conditions $R^T(t_{j_1},\dots, t_{j_m})$, $p_k(a_{i_{j_k}})\in C_{i_{j_k}}$ for all $k=1,\dots, m$, and $ R^C(p_1(a_{i_{j_1}}), \dots, p_m(a_{i_{j_m}}))$. We call these $j_1,\dots, j_m$ and $p_1,\dots, p_m\in K$ {\em a set of witnesses}. There are only finitely many possible sets of witnesses. Accumulating all sets of witnesses together, and introducing a left system (\ref{leftsystemC3}) with distinct variables $x, z_1,\dots, z_m$ for each set of witnesses, we obtain a single finite left system that is the union of all these left systems for each set of witnesses. Now this resulting left system has a solution. Conversely, if this system has a solution, then the solutions for $y_1, \dots, y_l$ will witness a homomorphism of $T$ into $\Gamma$. Thus the existence of a homomorphism of $T$ into $\Gamma$ is equivalent to a single left system having a solution.


Finally, since $\mathcal{T}$ is finite, we again have finitely many left systems such that (C3) holds iff each of the finitely many left systems on $H_1, \dots, H_n$ does not have a solution. 
 
In summary, all conditions (C1)--(C3) can be represented as finitely many left systems on $H_1,\dots, H_n$ not having a solution. Since $G$ has the HL-property, we can find $N_1,\dots,N_n\unlhd G$ such that each of the left systems described by (C1)--(C3) does not have a solution with respect to $(N_1H_1,\dots,N_nH_n)$. Indeed, for each of the left system $\Sigma$ there are such $N_1^\Sigma, \dots, N_n^\Sigma$ for the system. For each $i=1, \dots, n$, let $N_i$ be the intersection of all $N_i^\Sigma$. We thus get $N_1, \dots, N_n$ which are still of finite index in $G$ so that all of the left systems on $N_1H_1, \dots, N_nH_n$ still do not have a solution. This implies that the conditions (C1)--(C3) continue to hold with $H_i$ replaced by $N_iH_i$.

We now define $D'$ to be the finite $\mathcal{L}$-structure with domain $G/N_1H_1\sqcup\cdots\sqcup G/N_nH_n$ such that $S_i^{D'}=G/N_iH_i$ for all $i=1,\dots, n$, and for any $m$-ary relation symbol $R\in \mathcal{L}$, we have $R^{D'}(g_1N_{i_1}H_{i_1},\dots,g_mN_{i_m}H_{i_m})$ iff there are $p_1,\dots,p_m\in K$ and $g\in G$ such that $p_j(a_{i_j})\in C_{i_j}$ for all $j=1,\dots, m$, $R^C(p_1(a_{i_1}),\cdots,p_m(a_{i_m}))$, and 
\[
(g_1N_{i_1}H_{i_1},\dots,g_mN_{i_m}H_{i_m})=(gp_1N_{i_1}H_{i_1},\dots,gp_mN_{i_m}H_{i_m}).
\]
Consider the map $\pi': C\to D'$ defined as
$$ \pi'(c)=\left\{\begin{array}{ll} N_iH_i, & \mbox{ if $c=a_i$,} \\ pN_iH_i, & \mbox{ if $c\in C_i, c\neq a_i$, and $p\in K$ with $p(a_i)=c$.} \end{array}\right.$$
Then conditions (C1) and (C2) with $H_i$ replaced by $N_iH_i$ guarantee that $\pi'$ is an isomorphic embedding.  Condition (C3) with $H_i$ replaced by $N_iH_i$ implies that $D'$ is $\mathcal{T}$-free. The action of $G$ on $D'$ is by left multiplication, and each of $g\in G$ gives an isomorphism of the structure $D'$. Finally, we check that $\pi'$ is a $K$-map, and therefore an $F$-map. Let $p\in K$ and $c\in C_i$, and assume $p(c)\in C_i$. Suppose $q\in K$ with $q(a_i)=c$ and $r\in K$ with $r(a_i)=p(c)$. Then $\pi'(p(c))=rN_iH_i=pqN_iH_i=p(\pi'(c))$, where $r^{-1}pq\in H_i$ by the definition of $H_i$. This completes the proof of (i)$\Rightarrow$(ii).

\subsection{Proof of Theorem~\ref{HLproperty} (iii)$\Rightarrow$(i)}
We assume (iii) holds and show that $G$ has the HL-property. Suppose $H_1, \dots, H_n\leq G$ are finitely generated subgroups. Consider a left system $\Sigma$ with $l$ many equations on $H_1,\dots,H_n$ that does not have a solution. Let $A$ be the finite set of $g, g^{-1}\in G$ for all constants $g$ appearing in $\Sigma$. Let $H_0=\{1_G\}\leq G$ be the trivial subgroup. Consider a relational structure $D$ defined as follows: 
\begin{enumerate}
\item[a)] the domain of $D$ is $G/H_0\sqcup G/H_1\sqcup\cdots\sqcup G/H_n$;
\item[b)] there are $n+1$ many unary relation symbols $S_0, \dots, S_n$ such that $S_i^D=G/H_i$ for $i=0,\dots, n$;
\item[c)] there is a binary relation symbol $U$ such that $U^D=D\times D$;
\item[d)]for each $g\in A$, there is a binary relation $B_g$ such that $B_g^D=\{(hH_0, hgH_0): h\in G\}$;
\item[e)] for each tuple $t=(i_1,\dots, i_m)$, where $2\leq m\leq 2l+n+1$ and $0\leq i_j\leq n$ for each $j=1, \dots, m$, there is an $m$-ary relation symbol $R_t$ such that 
$$R_t^D(g_1H_{i_1},\dots,g_mH_{i_m}) \mbox{ iff } g_1H_{i_1}\cap\dots\cap g_mH_{i_m}\neq \emptyset. $$
\end{enumerate}
Let $\mathcal{L}$ be the language of $D$. We claim that the left system $\Sigma$ has a solution iff a specific finite $\mathcal{L}$-structure $T$ has a homomorphic image inside $D$. 

First we turn $\Sigma$ into an equivalent left system $\Sigma^*$ with the same number of equations. To do this, collect all equations in $\Sigma$ of the form $xH_i=gH_i$ where $x$ is a variable and $g\in A$. Introduce a new variable $y$ and replace every equation in the above collection by the equation $xH_i=ygH_i$. Denote the resulting left system as $\Sigma^*$. We claim that $\Sigma$ has a solution iff $\Sigma^*$ has a solution. First suppose $\Sigma$ has a solution. Then the solution for $\Sigma$ together with $y=1_G$ is a solution for $\Sigma^*$. Conversely, suppose $\Sigma^*$ has a solution in which $y=h$ in particular. Then this solution with every term left-multiplied by $h^{-1}$ is still a solution for $\Sigma^*$, which, with $y$ dropped, is a solution for $\Sigma$. Thus, without loss of generality, we may assume that all equations in $\Sigma$ are of the form $xH_i=ygH_i$ where $x, y$ are variables and $g\in A$.

Next we note that every equation of the form $xH_i=ygH_i$ can be replaced with two equations of the form $xH_i=x_{\text{new}}H_i$ and $x_{\text{new}}H_0=ygH_0$, where the last equation can be rearranged as $yH_0=x_{\text{new}}g^{-1}H_0$. By repeating this process, we may obtain an equivalent left system $\Sigma'$ with $\leq 2l$ many equations such that for any variable $x$ in $\Sigma$, the equations in $\Sigma'$ involving $x$ are all of the form $xH_i=yH_i$ or $xH_0=ygH_0$ for some variable $y$ and constant $g\in A$. Note that for the new variable $x_{\rm new}$ above, we get two equations $x_{\rm new}H_i=xH_i$ and $x_{\rm new}H_0=ygH_0$ by moving the cosets for $x_{\rm new}$ to the left hand side of the equations. Now for each variable $x$ in $\Sigma'$, consider the left system $\Sigma_x$ consisting only of the equations in $\Sigma'$ that involve $x$. From the above discussion we know that $\Sigma_x$ can be listed as:
$$\begin{array}{l} xH_{i_1}=\epsilon_1 H_{i_1} \\ \cdots\cdots \\ xH_{i_k}= \epsilon_k H_{i_k} \end{array} $$
for $k\leq 2l$ and each $\epsilon_j$ is either a variable $y$ or of the form $yg$ (in which case $i_j=0$) for a variable $y$ and a constant $g\in A$. Note that $\Sigma_x$ has a solution iff the following expression has a solution:
\begin{equation}\label{int} xH_0\cap xH_1\cap \cdots \cap xH_n\cap \epsilon_1H_{i_1}\cap \cdots \cap \epsilon_kH_{i_k}\neq\emptyset. \end{equation}
In fact, if $\Sigma_x$ has a solution $x, y, \dots$, then $x$ is in the intersection of (\ref{int}). Conversely, if (\ref{int}) holds for some $x, y,\dots$ then they become a solution of $\Sigma_x$. Thus each $\Sigma_x$ corresponds to a formal relation
\begin{equation}\label{formal} R_t(xH_0, xH_1, \dots, xH_n, \epsilon_1H_{i_1}, \dots, \epsilon_k H_{i_k}) \end{equation}
for a suitable $t$ of length $k+n+1\leq 2l+n+1$.

We now describe a finite $\mathcal{L}$-structure $T$. The domain of $T$ is the set of all formal cosets $xH_i$ and $xgH_0$, where $x$ is a variable in $\Sigma'$, $g\in A$, and $i=0,\dots, n$. The definition of $S_i^T$ is obvious. Also $U^T=T\times T$. For each $g\in A$, let $B_g^T=\{(xH_0, xgH_0): x \mbox{ is a variable in $\Sigma'$}\}$. The above formal relation (\ref{formal}) becomes now the definition of $R_t^T$. For other relation symbols $R_t$, $R_t^T$ is empty. Note that $T$ is a Gaifman clique. 

It is now clear that $\Sigma'$ has a solution iff there is a homomorphism from the structure $T$ into the structure $D$. Since $\Sigma$ does not have a solution, neither does $\Sigma'$ and it follows that $D$ is $T$-free.

$G$ acts faithfully on $D$ by left multiplication, and it is clear that the left multiplication by any $g\in G$ preserves the structure of $D$. It is also clear that $G$ acts transitively on $S_i^D=G/H_i$ for each $i=0, \dots, n$.

Let $C$ be a finite substructure of $D$ whose domain consists of all $H_i$ and $gH_i$ for $g\in A$ and $i=0,\dots, n$. Define $\rho: G\to \mathcal{P}(C)$ by letting $\rho(g)(c)=g(c)$ if $c, g(c)\in C$; otherwise $\rho(g)(c)$ is undefined. Since $C$ is finite, the set $\rho(G)$ is finite. Let $F\subseteq G$ be a finite subset so that $A\subseteq F$, $\rho(F)=\rho(G)$ and for each $i=1,\dots, n$, $F$ contains a finite set of generators for $H_i$. Since the action of $G$ on $S_i^D$ is transitive for each $i=0, \dots, n$, the partial action of $\rho(F)$ on $S_i^C$ is also transitive. Apply (iii) to get a finite $T$-free extension $D'$ of $C$ on which $G$ acts by isomorphisms, and an $F$-embedding $\pi$ from $C$ into $D'$. Note that $H_i$ is an element of $C$ and $\pi(H_i)$ is an element of $D'$, and we may assume that $D'=G(\pi(H_0))\sqcup\cdots\sqcup G(\pi(H_n))$. Let 
\[
N_i=\{g\in G : g(a)=a \text{ for every } a\in G(\pi(H_i))\}. 
\]
Since $D'$ is finite, $N_i$ is a normal subgroup of finite index.  Now let $\Sigma_{\vec{N}}$ be obtained from $\Sigma$ by replacing $H_1,\dots, H_n$ respectively by $N_1H_1,\dots, N_nH_n$. We claim that $\Sigma_{\vec{N}}$ does not have a solution, which shows that $G$ has the HL-property. 

Towards a contradiction, assume the left system $\Sigma_{\vec{N}}$ on $N_1H_1,\dots,N_nH_n$ has a solution. Similarly to the above, we can obtain an equivalent left system $\Sigma_{\vec{N}}^*$ such that each equation in $\Sigma_{\vec{N}}^*$ is of the form $xN_iH_i=ygN_iH_i$. Let $M_0=N_0\cap N_1\cap\cdots \cap N_n$. Then $M_0$ is still a normal subgroup of finite index, and obviously $M_0\leq N_i$ for all $i=0, \dots, n$. Now each equation of the form $xN_iH_i=ygN_iH_i$ in $\Sigma_{\vec{N}}^*$ can be equivalently replaced by $xN_iH_i=x_{\rm new}N_iH_i$ and $x_{\rm new}M_0H_0=ygM_0H_0$. Also, the last equation can be reformulated as $yM_0H_0=x_{\rm new}g^{-1}M_0H_0$ because of the normality of $M_0$. Thus we obtain an equivalent left system $\Sigma_{\vec{N}}'$ in a similar way as before, whose solution describes a homomorphic image of $T$ in a structure $\Gamma=G/M_0H_0\sqcup G/N_1H_1\sqcup\cdots\sqcup G/N_nH_n$. The exact definition of the structure $\Gamma$ is similar to the definition of $D$ above. For notational convenience we define $M_i=N_i$ for $i=1, \dots, n$.

Since $D'$ is a $T$-free structure, it is enough to show that there is a homomorphism from $\Gamma$ into $D'$. Consider the map $\psi:\Gamma\rightarrow D'$ defined by $\psi(gM_iH_i)= g(\pi(H_i))$. Then $\psi$ is the desired homomorphism. Note that $\psi$ is well-defined since if $g_1M_iH_i=g_2M_iH_i$ then $g_2=g_1nh$ for some $n\in M_i\leq N_i$ and $h\in H_i$; using the definition of $N_i$ and $H_i$ and the fact that $\pi$ is an $F$-embedding, we have $g_2(\pi(H_i))=g_1nh(\pi(H_i))=g_1(\pi(H_i))$. More precisely, we can write $h=f_1\cdots f_r$ with $f_1,\dots, f_r\in F$ as $F$ contains a finite set of generators for $H_i$; since $\pi$ is an $F$-embedding, we have $h(\pi(H_i))=f_1\cdots f_r(\pi(H_i))=\pi(f_1\cdots f_r(H_i))=\pi(H_i)$. Also, by definition of $N_i$, for $n\in N_i$ we have $n(\pi(H_i))=\pi(H_i)$. 

It remains to verify that $\psi$ preserves structure. For this let $t=(i_1,\dots, i_m)$ with $m\leq 2l+n+1$ and assume $R_t^\Gamma(g_1M_{i_1}H_{i_1},\dots,g_mM_{i_m}H_{i_m})$, that is, 
$$ g_1M_{i_1}H_{i_1}\cap \cdots\cap g_mM_{i_m}H_{i_m}\neq\emptyset. $$
Then there are $n_{i_j}\in M_{i_j}\leq N_{i_j}$ and $h_{i_j}\in H_{i_j}$ for $j=1,\dots, m$ such that 
$$g_1n_{i_1}h_{i_1}=\cdots=g_mn_{i_m}h_{i_m}=g. $$
The action of $g$ on $D'$ sends the tuple $(\pi(H_{i_1}), \dots,\pi(H_{i_m}))$ to 
\[
(g_1(\pi(H_{i_1})),\dots,g_m(\pi(H_{i_m})))=(\psi(g_1M_{i_1}H_{i_1}),\dots,\psi(g_mM_{i_m}H_{i_m})).
\]
 Note that $R_t^C(H_{i_1},\dots,H_{i_m})$ and therefore $R_t^{D'}(\pi(H_{i_1}),\dots,\pi(H_{i_m}))$. Now since $g$ acts by an isomorphism on $D'$, we have $R_t^{D'}(g_1(\pi(H_1)),\dots,g_m(\pi(H_m)))$.

Finally, consider $(hM_0H_0, hgM_0H_0)\in B_g^\Gamma$ for some $g\in A$ and $h\in G$. We need to show that $(h(\pi(H_0)), hg(\pi(H_0)))\in B_g^{D'}$. By the definition of $C$, we have $H_0, gH_0\in C$ and $B_g^C(H_0, gH_0)$. Since $g\in A\subseteq F$ and $\pi$ is an $F$-embedding, we have $\pi(gH_0)=g(\pi(H_0))$ and $B_g^{D'}(\pi(H_0), g(\pi(H_0)))$. Now $h$ acts by an isomorphism on $D'$, and so $B_g^{D'}(h\pi(H_0), hg(\pi(H_0)))$ as  desired.

This finishes the proof of Theorem~\ref{HLproperty}.

\subsection{Free products of groups with the HL-property}

As a corollary to Theorem~\ref{HLproperty}, we show below that the HL-property is closed under taking finite free products. This is analogous to the theorem of Coulbois \cite{C} which states that the RZ-property is closed under taking finite free products. In the proof of the corollary we use the coherence result of Sinora--Solecki \cite{S2}, which is also established in \cite{HKN} with a different proof. We summarize in the following proposition the exact fact we will need in our proof.

\begin{prop}\label{HKNprop}
Let $C$ be a finite $\mathcal{T}$-free $\mathcal{L}$-structure. Then, $C$ has a finite $\mathcal{T}$-free HL-extension $(D,\phi)$ such that for every substructure $E\subseteq C$, $\phi_E :\mbox{\rm Aut}(E)\to \mbox{\rm Aut}(D)$ defined as 
$$\phi_E(p)=\left\{\begin{array}{ll}\phi(p), &\mbox{ if $p\in \mbox{\rm Aut}(E)\cap \mathcal{P}_C$,} \\
1_D, & \mbox{ if $p=1_E$,} \end{array}\right. $$ is a group isomorphic embedding.
\end{prop}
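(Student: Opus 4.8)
The plan is to deduce this directly from the coherence theorem of Siniora--Solecki \cite{S2}. Applied to the finite $\mathcal{T}$-free structure $C$, that result yields a finite $\mathcal{T}$-free HL-extension $(D,\phi)$ that is \emph{coherent}, meaning $\phi(p)=\phi(q)\circ\phi(r)$ whenever $p,q,r$ are partial isomorphisms of $C$ with $p=q\circ r$. To apply coherence uniformly I would first extend $\phi$ to all of $\mathcal{P}(C)$ by declaring $\phi(p)=1_D$ whenever $p\subseteq 1_C$; note that coherence applied to $1_E=1_E\circ 1_E$ forces $\phi(1_E)$ to be idempotent in a group, hence equal to $1_D$, so this extension is the only consistent one. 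With such a coherent $(D,\phi)$ fixed, it remains to check that $\phi_E$ is a group isomorphic embedding for every substructure $E\subseteq C$.

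First I would verify that $\phi_E$ is a group homomorphism. Given $p,q\in\mbox{Aut}(E)$, the composition $p\circ q$ is again an automorphism of $E$, and each of $p$, $q$, $p\circ q$ is a partial isomorphism of $C$ (lying in $\mathcal{P}_C$ when it differs from $1_E$). Coherence applied to the triple $(p\circ q,\, p,\, q)$ gives $\phi(p\circ q)=\phi(p)\circ\phi(q)$, which upon unwinding the two-case definition of $\phi_E$ is precisely $\phi_E(p\circ q)=\phi_E(p)\circ\phi_E(q)$. The degenerate cases are consistent: $\phi_E(1_E)=1_D$ by construction, and when $q=p^{-1}$ (so $p\circ q=1_E$) the standing convention $\phi(p^{-1})=\phi(p)^{-1}$ gives $\phi_E(p)\circ\phi_E(q)=\phi(p)\circ\phi(p)^{-1}=1_D=\phi_E(1_E)$.

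Next I would establish injectivity by showing $\ker\phi_E$ is trivial. If $p\in\mbox{Aut}(E)$ with $p\neq 1_E$, then $p\in\mathcal{P}_C$ and there is some $a\in E$ with $p(a)\neq a$. Since $\phi_E(p)=\phi(p)$ extends $p$, we have $\phi_E(p)(a)=p(a)\neq a$, so $\phi_E(p)\neq 1_D$. Hence $\phi_E(p)=1_D$ only for $p=1_E$, so the kernel is trivial and $\phi_E$ is an isomorphic embedding.

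The only genuine subtlety — and the step I would treat most carefully — is the interface between the Siniora--Solecki statement, phrased for arbitrary partial isomorphisms, and our convention that $\phi$ is defined only on the nonidentity partial isomorphisms $\mathcal{P}_C$. Once $\phi$ is extended to identity partial isomorphisms with value $1_D$ and the convention $\phi(p^{-1})=\phi(p)^{-1}$ is in force, the homomorphism property for each $\phi_E$ becomes a direct special case of coherence, and injectivity is automatic from the fact that a nonidentity automorphism of $E$ already moves a point. No new combinatorial construction is required beyond invoking \cite{S2}.
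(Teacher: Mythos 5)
Your proposal is correct and takes essentially the same route as the paper: both invoke the Siniora--Solecki coherence theorem, note that coherence (via $1_E=1_E\circ 1_E$) forces identity partial isomorphisms to map to $1_D$, obtain the homomorphism property of $\phi_E$ as a special case of coherence, and get injectivity from the fact that $\phi(p)$ extends $p$, so a nonidentity automorphism of $E$ cannot map to $1_D$. The only cosmetic difference is that the paper starts from the coherent map defined on all of $\mathcal{P}(C)$ and restricts it to $\mathcal{P}_C$, whereas you restrict first and then re-extend by fiat; the mathematical content is identical.
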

\begin{proof}
It was proved in \cite{S2} and \cite{HKN} that, for any finite $\mathcal{T}$-free $\mathcal{L}$-structure $C$, there is a finite $\mathcal{T}$-free extension $D$ of $C$ and a map $\varphi: \mathcal{P}(C)\to \mbox{Aut}(D)$ such that $p\subseteq \varphi(p)$ for all $p\in \mathcal{P}(C)$, and for any $p,q,r\in \mathcal{P}(C)$ with $p\circ q=r$ we have $\varphi(p)\circ \varphi(q)=\varphi(r)$. We claim $(D,\varphi\upharpoonright \mathcal{P}_C)$ is the desired HL-extension. Let $E\subseteq C$ be a substructure. Since $1_E\circ 1_E=1_E$, we have $\varphi(1_E)\circ \varphi(1_E)=\varphi(1_E)$. Thus $\varphi(1_E)=1_D$, and $\phi_E=\varphi\upharpoonright \mbox{Aut}(E)$. The coherence property clearly implies that $\phi_E$ is a group homomorphism from $\mbox{Aut}(E)$ into $\mbox{Aut}(D)$. Assume $g\in\mbox{Aut}(E)\subseteq \mathcal{P}(C)$ and $\phi_E(g)=\varphi(g)=1_D$, then $g=1_E$ since $g\subseteq\varphi(g)$. Therefore, $\phi_E$ is an isomorphic embedding from $\mbox{Aut}(E)$ into $\mbox{Aut}(D)$.
\end{proof}

In the proof of the corollary we will also need a property of Gaifman cliques proved by Siniora--Solecki in \cite{S2}. To explain the property, first recall some definitions.

\begin{defn}
Let $\mathcal{L}$ be a relational language and $C_1,C_2$ and $C$ be $\mathcal{L}$-structures. Assume $C\subseteq C_1,C_2$. Then the \textit{free amalgamation} of $C_1$ and $C_2$ over $C$ is the structure on $D=(C_1\setminus C) \sqcup C \sqcup (C_2\setminus C)$ where for every relation $R$ in the language $R^{D}=R^{C_1}\cup R^{C_2}$. A class $\mathcal{C}$ of $\mathcal{L}$-structures has the {\em free amalgamation property} if the free amalgamation of any two structures in $\mathcal{C}$ over a structure in $\mathcal{C}$ is still in $\mathcal{C}$.
\end{defn}
Siniora--Solecki proved in Lemma 4.5 of \cite{S2} that a class $\mathcal{C}$ of $\mathcal{L}$-structures has the free amalgamation property iff there is a set $\mathcal{T}$ of $\mathcal{L}$-structures each of which is a Gaifman clique such that $\mathcal{C}$ is exactly the collection of all $\mathcal{L}$-structures $C$ for which there does not exist any isomorphic embedding from any $T\in\mathcal{T}$ into $C$.  
Note that in our context (where $\mathcal{T}$ is a finite set of finite $\mathcal{L}$-structures) the statement implies that the class of finite $\mathcal{T}$-free $\mathcal{L}$-structures has the free amalgamation property iff all $T\in\mathcal{T}$ are Gaifman cliques. This is because, if $\mathcal{T}$ is a set of Gaifman cliques and if we let $\mathcal{T}'$ to be the set of all homomorphic images of structures in $\mathcal{T}$, then $\mathcal{T}'$ is still a finite set of Gaifman cliques, and the collection of $\mathcal{T}$-free structures is exactly the collection of structures into which no $T\in\mathcal{T}'$ isomorphically embed.

\begin{cor}\label{coherenceHLproperty}
Let $G_1,G_2$ be two groups with the HL-property. Then, the free product of $G_1$ and $G_2$, $G_1*G_2$, has the HL-property.
\end{cor}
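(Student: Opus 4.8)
The plan is to prove the corollary through the characterization of the HL-property provided by Theorem~\ref{HLproperty}, specifically by verifying condition (iii) for $G = G_1 * G_2$. So I fix a finite relational language $\mathcal{L}$ with unary symbols $S_1, \dots, S_n$ partitioning the domain, a finite set $\mathcal{T}$ of Gaifman cliques, a $\mathcal{T}$-free $\mathcal{L}$-structure $D$ on which $G$ acts faithfully by isomorphisms and transitively on each $S_i^D$, a finite substructure $C \subseteq D$, and a finite subset $F \subseteq G$; the goal is to produce a finite $\mathcal{T}$-free $D'$ carrying a $G$-action by isomorphisms together with an $F$-embedding $C \to D'$. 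Since $G_1, G_2$ have the full HL-property, I am free to invoke Theorem~\ref{HLproperty}(ii) for each factor.

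First I would prepare the data along the free-product normal form. After enlarging $F$ to be symmetric and to contain $1$, I write each element of $F$ as a reduced alternating word in $G_1 \setminus \{1\}$ and $G_2 \setminus \{1\}$, and collect the syllables occurring into finite symmetric sets $F_1 \subseteq G_1$ and $F_2 \subseteq G_2$, also closing $F_i$ under generators of the relevant stabilizers. Restricting the $G$-action on $D$ to the factor $G_i$ and refining the unary predicates so as to separate the (finitely many relevant) $G_i$-orbits, I apply Theorem~\ref{HLproperty}(ii) to $G_i$ to obtain a finite $\mathcal{T}$-free structure $D_i'$ carrying a $G_i$-action by isomorphisms and an $F_i$-embedding $\pi_i$ of the relevant part of $C$ into $D_i'$. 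The coherence statement of Proposition~\ref{HKNprop} is used here to arrange that the two pieces agree on the common substructure $C$: the automorphisms induced on $C$ embed coherently into both $\mathrm{Aut}(D_1')$ and $\mathrm{Aut}(D_2')$, so that the factor actions can later be fitted into a single global action.

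Next I assemble $D'$ by amalgamating copies of $D_1'$ and $D_2'$ indexed by the Bass--Serre tree of $G = G_1 * G_2$, glued along their common $C$-part according to the edge--vertex incidences, with $G$ acting by the combination of the tree action and the factor actions. The decisive point is that $\mathcal{T}$-freeness is preserved: since every $T \in \mathcal{T}$ is a Gaifman clique, by the Siniora--Solecki characterization recalled after Proposition~\ref{HKNprop} the class of $\mathcal{T}$-free structures is closed under free amalgamation, and an isomorphic copy of a Gaifman clique in a free amalgam, having all its elements pairwise joined by relations, must lie entirely inside a single vertex piece, each of which is $\mathcal{T}$-free. Thus the amalgam is $\mathcal{T}$-free. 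Finally, to secure finiteness together with an honest $G$-action, I would fold the (a priori infinite) tree amalgam onto a finite quotient: using that $G_1, G_2$ have the HL-property and hence are residually finite, so that $G$ is residually finite, together with the finite-index normal subgroups produced in the application of Theorem~\ref{HLproperty}(ii) to each factor, I pass to a finite quotient $Q$ of $G$ through which all the pieces' actions factor and which separates the elements of $F$. The induced $D'$ is then finite, $\mathcal{T}$-free, carries a $G$-action, and the $F_i$-embeddings assemble into an $F$-embedding $C \to D'$; by Theorem~\ref{HLproperty}, $G_1 * G_2$ has the HL-property.

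The main obstacle I anticipate lies in the last step: reconciling finiteness of $D'$ with a genuine, globally well-defined $G$-action while simultaneously preserving both $\mathcal{T}$-freeness and the $F$-embedding. The Bass--Serre tree gives a natural honest $G$-action but only on an infinite object, and collapsing it to a finite quotient must be done compatibly with the factor structures $D_1', D_2'$ and their identifications along $C$. This is exactly where the coherence of Proposition~\ref{HKNprop} (ensuring the gluing is consistent on $C$ so that the folded action is well defined) and the Gaifman-clique hypothesis (ensuring free amalgamation keeps the amalgam $\mathcal{T}$-free) are indispensable, and checking that the collapse creates no forbidden configuration nor destroys the injectivity and relation-preservation of $\pi$ on $F$ will require the most care.
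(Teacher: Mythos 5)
Your overall frame---verifying clause (iii) of Theorem~\ref{HLproperty} for $G_1*G_2$, splitting $F$ into syllable sets $F_1\subseteq G_1$, $F_2\subseteq G_2$, invoking the HL-property of each factor to get finite $\mathcal{T}$-free structures $D_1'$, $D_2'$ with $G_k$-actions and $F_k$-embeddings, and exploiting that Gaifman cliques make the class of $\mathcal{T}$-free structures closed under free amalgamation---matches the paper's proof. But your assembly step has a genuine gap, and it is not the kind that ``care'' can close. You build an infinite tree-indexed amalgam with its natural $G$-action and then propose to fold it onto a finite structure by passing to a finite quotient of $G$ obtained from residual finiteness together with the kernels of the factor actions. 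The problem is that $\mathcal{T}$-freeness is a condition about homomorphisms \emph{into} the structure, and it is not preserved under taking quotients: the quotient map $D^{\mathrm{tree}}\to D^{\mathrm{tree}}/N$ is a surjective homomorphism, and a homomorphism $T\to D^{\mathrm{tree}}/N$ need not lift to $D^{\mathrm{tree}}$. Already for graphs this fails: a bi-infinite line is triangle-free, but its quotient by a finite-index subgroup of $\mathbb{Z}$ can be a triangle. Choosing $N$ merely to separate $F$ and to factor the piecewise actions gives no control over which forbidden configurations appear after collapsing; preventing exactly such configurations is the content of the HL-property itself (non-solvability of the relevant left systems modulo finite-index subgroups), so your plan is circular at precisely the decisive point. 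Relatedly, Proposition~\ref{HKNprop} does not perform the task you assign to it (``arranging that the two pieces agree on $C$''): agreement on the common copy of $C$ is automatic once one glues along $\pi_2\circ\pi_1^{-1}$.

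The paper's proof avoids the infinite object altogether, and this is the idea your proposal is missing. After enlarging $C$ to a finite $C'\subseteq D$ closed under the intermediate images $f_j\cdots f_l(a)$ coming from syllable decompositions of elements of $F$ (a closure you will also need for the final $F$-embedding computation, and which your sketch omits), one forms the \emph{finite} free amalgam $D_0$ of $D_1'$ and $D_2'$ over $\pi_1(C')\cong\pi_2(C')$, which is $\mathcal{T}$-free by the Gaifman-clique hypothesis, and then applies Proposition~\ref{HKNprop} once to $D_0$: this yields a finite $\mathcal{T}$-free HL-extension $D'$ of $D_0$ such that $\mathrm{Aut}(D_1')$ and $\mathrm{Aut}(D_2')$ embed into $\mathrm{Aut}(D')$. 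Composing $G_k\to\mathrm{Aut}(D_k')\hookrightarrow\mathrm{Aut}(D')$ and using the universal property of the free product gives an action of $G_1*G_2$ on the finite structure $D'$ by isomorphisms---no Bass--Serre tree, no quotient, and no faithfulness needed (clause (iii) does not require the action on $D'$ to be faithful, so there is nothing for a finite quotient of $G$ to separate). The inclusion $C\hookrightarrow D'$ is then checked to be an $F$-embedding by chaining the $F_k$-embedding property through the points of $C'$. In short, the coherent EPPA theorem (Proposition~\ref{HKNprop}) applied to the finite amalgam performs, in one step, exactly the task your folding step cannot.
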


\begin{proof}
Suppose $G_1, G_2$ have the HL-property. To show that $G_1*G_2$ has the HL-property, we use the equivalence between clauses (i) and (iii) of Theorem \ref{HLproperty}. Specifically, we show the following: 
\begin{enumerate}
\item[] Let $\mathcal{L}$ be a finite relational language with unary relation symbols $S_1,\dots, S_n$. Let $\mathcal{T}$ be a finite set of finite $\mathcal{L}$-structures such that every $T\in\mathcal{T}$ is a Gaifman clique. Let $D$ be a $\mathcal{T}$-free $\mathcal{L}$-structure such that $\{S_1^D, \dots, S_n^D\}$ is a partition of the domain of $D$. Let $C$ be a finite substructure of $D$. Let $F$ be a finite subset of $G_1*G_2$. Suppose that $G_1*G_2$ acts faithfully by isomorphisms on $D$ and that $G_1*G_2$ acts transitively on each $S_i^D$ for $i=1,\dots, n$. Then there exists a finite $\mathcal{T}$-free $\mathcal{L}$-structure $D'$ on which $G_1*G_2$ acts by isomorphisms, and an $F$-embedding from $C$ into $D'$.
\end{enumerate}
In the following we construct the desired structure $D'$.

Let $F_1\subseteq G_1$ and $F_2\subseteq G_2$ be finite subsets such that $F\subseteq F_1*F_2$. Let $C'\subseteq D$ be a finite structure extending $C$ such that for every $f=f_1f_2\cdots f_l\in F$ where $f_i\in F_1\cup F_2$ for every $i=1,2,\dots,l$, and every $a\in C$ where $f (a)\in C$, we have $f_j\cdots f_l (a)\in C'$ for every $1\leq j\leq l$. Since $G_1$ and $G_2$ have the HL-property, we can find finite $\mathcal{T}$-free $\mathcal{L}$-structures $D_1'$ and $D_2'$ such that for $k=1,2$:
\begin{enumerate}
\item $G_k$ acts by isomorphisms on $D_k'$, and
\item there exists an $F_k$-embedding $\pi_k$ from $C'$ to $D_k'$.
\end{enumerate}

Let $D_0$ be the free amalgamation of $D_1'$ and $D_2'$ over $\pi_1(C')\cong \pi_2(C')$, that is, the underlying set of $D_0$ is $(D_1'\setminus \pi_1(C'))\sqcup C'\sqcup (D_2'\setminus \pi_2(C'))$ and for every relation $R$ in the language $R^{D_0}=R^{D_1'}\cup R^{D_2'}$. Since $\mathcal{T}$ consists of only Gaifman cliques, the collection of all $\mathcal{T}$-free $\mathcal{L}$-structures has the free amalgamation property. Thus $D_0$ is $\mathcal{T}$-free.

By Proposition \ref{HKNprop}, there exists a finite $\mathcal{T}$-free HL-extension $(D',\phi)$ of $D_0$ such that for every finite substructure $E\subseteq D_0$, $\phi$ induces a group isomorphic embedding from $\mbox{Aut}(E)$ to $\mbox{Aut}(D)$. In particular, this holds for $E=D_1',D_2'$. Therefore, $\phi$ induces an action of $G_k$ on $D'$ by $g(a)=\phi(g)(a)$ for $k=1,2$. By considering the free product of these two actions, we get an action of $G_1*G_2$ on $D'$ by isomorphisms. It remains to show that there exists an $F$-embedding $\pi$ from $C$ to $D'$. Let $\pi:C'\rightarrow D'$ denote the inclusion map. We claim $\pi\upharpoonright C$ is as desired. Let $f=f_1f_2\cdots f_l\in F$ where $f_i\in F_1\cup F_2$ for every $i=1,2,\dots,l,$ and $a\in C$ be such that $f(a)\in C$. Note that for $k=1, 2$, since $\pi$ is an $F_k$-embedding from $C'$ to $D_k'$, we have that $\pi$ is also an $F_k$-embedding from $C'$ to $D'$. Therefore, 
\[
\pi(f(a))=\pi(f_1\cdots f_k (a))=f_1(\pi(f_2\cdots f_l (a)))=\cdots=f_1\cdots f_l (\pi(a))=f(\pi(a)).
\]
\end{proof}

\section{Coherent HL-extensions and Ultraextensive Structures\label{CoherentHLextensions}}

In this section we introduce a notion of ultraextensive $\mathcal{L}$-structures using a new notion of coherent HL-extensions. Coherence in our sense is slightly weaker than the coherence notion of Siniora--Solecki \cite{S2} but is sufficient for deriving the interesting properties of ultraextensive structures. These notions are generalizations of similar notions in \cite{EG} in the context of metric spaces.

\begin{defn}\label{coherence}
Let $C_1\subseteq C_2$ be $\mathcal{L}$-structures and $(D_i,\phi_i)$ be an HL-extension of $C_i$ for $i=1,2$. We say that $(D_1,\phi_1)$ and $(D_2, \phi_2)$ are {\it coherent} if 
\begin{enumerate}
\item[(i)] $D_2$ extends $D_1$, 
\item[(ii)] $\phi_2(p)$ extends $\phi_1(p)$ for all $p\in\mathcal{P}_{C_1}\subseteq \mathcal{P}_{C_2}$, and
\item[(iii)] letting $K_i=\langle\phi_i(\mathcal{P}_{C_i})\rangle\leq \mbox{Aut}(D_i)$ for $i=1,2$, and letting $\kappa:\phi_1(\mathcal{P}_{C_1})\to \phi_2(\mathcal{P}_{C_2})$ be such that $\kappa(\phi_1(p))=\phi_2(p)$ for all $p\in\mathcal{P}_{C_1}$, then $\kappa$ has a unique extension to a group isomorphic embedding from $K_1$ into $K_2$.
\end{enumerate}
\end{defn}

\begin{defn}\label{uedef}
An $\mathcal{L}$-structure $U$ is {\it ultraextensive} if 
\begin{enumerate}
\item[(i)] $U$ is ultrahomogeneous, i.e., there is a $\phi$ such that $(U,\phi)$ is an HL-extension of $U$;
\item[(ii)] Every finite $C\subseteq U$ has a finite HL-extension $(D,\phi)$ where $D\subseteq U$;
\item[(iii)] If $C_1\subseteq C_2\subseteq U$ are finite and $(D_1, \phi_1)$ is a finite minimal HL-extension of $C_1$ with $D_1\subseteq U$, then there is a finite minimal HL-extension $(D_2,\phi_2)$ of $C_2$ such that $D_2\subseteq U$ and $(D_1,\phi_1)$ and $(D_2,\phi_2)$ are coherent.
\end{enumerate}
\end{defn}

\begin{thm}\label{thmcoherent} Let $\mathcal{T}$ be a finite set of finite $\mathcal{L}$-structures each of which is a Gaifman clique. Suppose $C_1\subseteq C_2$ are finite $\mathcal{T}$-free $\mathcal{L}$-structures and $(D_1,\phi_1)$ is a finite $\mathcal{T}$-free HL-extension of $C_1$. Then there is a finite $\mathcal{T}$-free HL-extension $(D_2,\phi_2)$ of $C_2$ so that $(D_2, \phi_2)$ is coherent with $(D_1,\phi_1)$. 
\end{thm}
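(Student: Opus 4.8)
The plan is to produce $D_2$ in two stages: first build one finite $\mathcal{T}$-free structure that contains both $D_1$ and $C_2$, and then invoke the strong (Siniora--Solecki) coherence of Proposition~\ref{HKNprop} on that structure to manufacture all the required automorphisms at once. First I would let $B$ be the free amalgamation of $D_1$ and $C_2$ over $C_1$, so $B=(D_1\setminus C_1)\sqcup C_1\sqcup(C_2\setminus C_1)$ with $R^B=R^{D_1}\cup R^{C_2}$; then $B$ contains $D_1$ and $C_2$ as substructures. Because every $T\in\mathcal{T}$ is a Gaifman clique, the class of finite $\mathcal{T}$-free $\mathcal{L}$-structures has the free amalgamation property (recorded after Proposition~\ref{HKNprop}), so $B$ is $\mathcal{T}$-free: a homomorphic image of a Gaifman clique in $B$ must land entirely in $D_1$ or entirely in $C_2$, since there are no relations bridging the amalgam, and both pieces are $\mathcal{T}$-free. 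Applying Proposition~\ref{HKNprop} to $B$ then yields a finite $\mathcal{T}$-free extension $D_2\supseteq B$ together with the coherent map $\varphi:\mathcal{P}(B)\to\mbox{Aut}(D_2)$ from its proof, satisfying $p\subseteq\varphi(p)$ and $\varphi(p)\varphi(q)=\varphi(r)$ whenever $p\circ q=r$, and such that $\varphi$ restricts to a group isomorphic embedding of $\mbox{Aut}(E)$ into $\mbox{Aut}(D_2)$ for every substructure $E\subseteq B$.

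The essential idea is how to define $\phi_2:\mathcal{P}_{C_2}\to\mbox{Aut}(D_2)$. For $p\in\mathcal{P}_{C_2}\setminus\mathcal{P}_{C_1}$ I would simply set $\phi_2(p)=\varphi(p)$. For $p\in\mathcal{P}_{C_1}$ I would \emph{not} use $\varphi(p)$; instead I observe that $\phi_1(p)\in\mbox{Aut}(D_1)$ is a partial isomorphism of $B$ with domain and range $D_1$ which moves a point of $C_1$, hence $\phi_1(p)\in\mathcal{P}_B$, and I set $\phi_2(p)=\varphi(\phi_1(p))$. Since $\varphi(\phi_1(p))\supseteq\phi_1(p)\supseteq p$, each $\phi_2(p)$ extends $p$, so $(D_2,\phi_2)$ is a finite $\mathcal{T}$-free HL-extension of $C_2$, and the inverse convention $\phi_2(p^{-1})=\phi_2(p)^{-1}$ follows from coherence of $\varphi$. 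The point of using $\varphi(\phi_1(p))$ rather than $\varphi(p)$ is that $\phi_1(p)$ is defined on all of $D_1$, so $\varphi(\phi_1(p))$ restricts on $D_1$ to exactly $\phi_1(p)$; this is precisely what yields clause~(ii) of Definition~\ref{coherence}, namely $\phi_2(p)\supseteq\phi_1(p)$ for all $p\in\mathcal{P}_{C_1}$. Clause~(i), $D_2\supseteq D_1$, is immediate from $D_2\supseteq B\supseteq D_1$.

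For clause~(iii) I would apply Proposition~\ref{HKNprop} with the substructure $E=D_1\subseteq B$: the induced map $\varphi\upharpoonright\mbox{Aut}(D_1)$ (sending $1_{D_1}\mapsto 1_{D_2}$) is a group isomorphic embedding $\Theta:\mbox{Aut}(D_1)\to\mbox{Aut}(D_2)$. Restricting $\Theta$ to $K_1=\langle\phi_1(\mathcal{P}_{C_1})\rangle\leq\mbox{Aut}(D_1)$ gives an isomorphic embedding that sends each generator $\phi_1(p)$ to $\varphi(\phi_1(p))=\phi_2(p)$, so $\Theta\upharpoonright K_1$ extends $\kappa$ to a group isomorphic embedding; its image $\langle\phi_2(p):p\in\mathcal{P}_{C_1}\rangle$ lies in $K_2=\langle\phi_2(\mathcal{P}_{C_2})\rangle$, and uniqueness holds since $K_1$ is generated by $\phi_1(\mathcal{P}_{C_1})$, on which the extension is forced.

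The hard part — and the reason for the whole construction — is clause~(iii): transporting the prescribed group $K_1\leq\mbox{Aut}(D_1)$ into $\mbox{Aut}(D_2)$ while preserving products. Feeding $p$ directly into $\varphi$ would only secure $\varphi(p)\supseteq p$, never $\varphi(p)\supseteq\phi_1(p)$, and would give no control over compositions; feeding the full automorphisms $\phi_1(p)\in\mbox{Aut}(D_1)$ into the \emph{coherent} $\varphi$ is exactly what converts the multiplicativity of $\varphi$ on $\mbox{Aut}(D_1)$ into the desired embedding $K_1\to K_2$. The Gaifman-clique hypothesis on $\mathcal{T}$ enters only to keep the free amalgamation $B$ within the class of $\mathcal{T}$-free structures, which is the technical assumption highlighted in the introduction.
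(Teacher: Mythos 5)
Your proposal is correct and follows essentially the same route as the paper: free amalgamation of $D_1$ and $C_2$ over $C_1$ (kept $\mathcal{T}$-free by the Gaifman-clique hypothesis), then the Siniora--Solecki coherent extension applied to the amalgam, with the same key trick of setting $\phi_2(p)=\varphi(\phi_1(p))$ for $p\in\mathcal{P}_{C_1}$. The only cosmetic difference is that for clause (iii) you cite Proposition~\ref{HKNprop} with $E=D_1$ to get the embedding $\mbox{Aut}(D_1)\to\mbox{Aut}(D_2)$ and restrict it to $K_1$, whereas the paper verifies directly from the multiplicativity of $\varphi$ that $\kappa$ is well-defined and has trivial kernel; these are the same argument in different packaging.
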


\begin{proof}
Since every $T\in\mathcal{T}$ is a Gaifman clique, the collection of all $\mathcal{T}$-free structures has the free amalgamation property. Let $C$ be the free amalgamation of $D_1$ and $C_2$ over $C_1$. Then $C$ is $\mathcal{T}$-free. We will again use the main theorem of \cite{S2} and \cite{HKN}, which states that, for any finite $\mathcal{T}$-free $\mathcal{L}$-structure $C$, there is a finite $\mathcal{T}$-free extension $D_2$ of $C$ and a map $\varphi: \mathcal{P}(C)\to \mbox{Aut}(D_2)$ such that $p\subseteq \varphi(p)$ for all $p\in \mathcal{P}(C)$, and for any $p,q,r\in \mathcal{P}(C)$ with $p\circ q=r$ we have $\varphi(p)\circ \varphi(q)=\varphi(r)$. 

Define $\phi_2: \mathcal{P}_{C_2}\to \mbox{Aut}(D_2)$ as
$$ \phi_2(p)=\left\{\begin{array}{ll} \varphi(\phi_1(p)), & \mbox{ if $p\in \mathcal{P}_{C_1}\subseteq \mathcal P_{C_2}$,}\\
\varphi(p), & \mbox{ if $p\in \mathcal{P}_{C_2}\setminus \mathcal{P}_{C_1} \subseteq \mathcal{P}(C)$. }
\end{array}\right.
$$
Then $(D_2, \phi_2)$ is an HL-extension of $C_2$. It is also clear that $D_2$ extends $D_1$. For $p\in \mathcal{P}_{C_1}$, our definition of $\phi_2$ gives that $\phi_2(p)=\varphi(\phi_1(p))\supseteq \phi_1(p)$. Now define $\kappa: K_1\to K_2$ by letting $\kappa(\phi_1(p))=\phi_2(p)$ and extending the definition of $\kappa$ to all finite products in $K_1=\langle \phi_1(\mathcal{P}_{C_1})\rangle\leq \mbox{Aut}(D_1)$. We first verify that $\kappa$ is well-defined. For this let $p_1,\dots, p_n\in \mathcal{P}_{C_1}$ such that $\phi_1(p_1)\cdots \phi_1(p_n)=1_{K_1}$. We need to show that $\phi_2(p_1)\cdots\phi_2(p_n)=1_{K_2}$. Both products take place in an automorphism group, so they are compositions. By the coherent property of $\varphi$, we have $\varphi(\phi_1(p_1))\cdots\varphi(\phi_1(p_n))=\varphi(1_{K_1})$, and so $\phi_2(p_1)\cdots \phi_2(p_n)=1_{K_2}$. Thus we have shown that $\kappa$ is a group homomorphism. To see that it is a group isomorphic embedding, we show that the kernel of $\kappa$ is trivial. For this let $p_1,\dots, p_n\in \mathcal{P}_{C_1}$ so that $\phi_2(p_1)\cdots \phi_2(p_n)=1_{K_2}$. Restricting all maps on $D_1$, we get $\phi_1(p_1)\cdots \phi_1(p_n)=1_{K_1}$.
\end{proof}

We remark that the condition in the above theorem for $\mathcal{T}$ to consist only of Gaifman cliques is necessary. If $\mathcal{T}$ fails this property, not only the proof fails to work because of the failure of the free amalgamation property for the collection of $\mathcal{T}$-free $\mathcal{L}$-structures, but also the statement of the theorem can fail. 

We give a counterexample below.

 Consider $\mathcal{L}=\{R, S\}$ where $R$ is a binary relation symbol and $S$ is a quarternary relation symbol. Let $T=\{ 0, 1,2, 3, 4, 5,6\}$ where $R^T=\{(0,1), (1,2), (2,0)\}$ and $S^T=\{(a, b, c, d): a, b, c, d\in\{3, 4, 5, 6\}\}$. Let $C_2=\{ x, y, z\}$ with $R^{C_2}=\{(x,y), (y, z), (z, x)\}$ and $S^{C_2}=\emptyset$. Let $C_1=\{x, y\}$ be the induced substructure of $C_2$. Let $D_1=\{x, y, u, v\}$ where $R^{D_1}=\{(x, y), (y, u), (u, v), (v, x)\}$ and $S^{D_1}=\{(a, b, c, d): a, b, c, d\in\{x, y, u, v\}\}$. Then $\mathcal{P}_{C_1}=\{x\mapsto y, y\mapsto x\}$ and $(D_1,\phi_1)$ is an HL-extension of $C_1$, with $\phi_1: \mathcal{P}_{C_1}\to \mbox{Aut}(D_1)$ extending $x\mapsto y$ to the automorphism $\{x\mapsto y, y\mapsto u, u\mapsto v, v\mapsto x\}$ and extending $y\mapsto x$ to the automorphism $\{y\mapsto x, x\mapsto v, v\mapsto u, u\mapsto y\}$. Note that $C_1,C_2,D_1$ are $T$-free $\mathcal{L}$-structures. Now there is no $T$-free HL-extension $(D_2,\phi_2)$ of $C_2$ that is coherent with $(D_1, \phi_1)$.

\begin{thm} \label{uecountable} Let $\mathcal{T}$ be a finite set of finite $\mathcal{L}$-structures each of which is a Gaifman clique. Then every countable $\mathcal{T}$-free $\mathcal{L}$-structure can be extended to a countable $\mathcal{T}$-free ultraextensive  $\mathcal{L}$-structure.
\end{thm}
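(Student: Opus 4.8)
The plan is to build the desired ultraextensive structure as the union of an increasing chain of finite $\mathcal{T}$-free structures, obtained by a bookkeeping construction that repeatedly applies Theorem~\ref{thmcoherent} together with the Herwig--Lascar theorem (Theorem~\ref{thm_HL}). Let $C$ be the given countable $\mathcal{T}$-free structure. I would first write $C=\bigcup_k C^{(k)}$ as an increasing union of finite substructures, and then interleave into the construction three families of tasks corresponding to the three clauses of Definition~\ref{uedef}: extending to a larger structure, providing HL-extensions of finite substructures, and providing coherent extensions of given minimal HL-extensions. A standard priority/dovetailing enumeration ensures that every such task is eventually addressed.

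The core step is the following. At stage $s$ I will have a finite $\mathcal{T}$-free structure $U_s$, and I will be handed (by the bookkeeping) a pair $C_1\subseteq C_2$ of finite substructures of $U_s$ together with a finite minimal $\mathcal{T}$-free HL-extension $(D_1,\phi_1)$ of $C_1$ with $D_1\subseteq U_s$. Applying Theorem~\ref{thmcoherent} yields a finite $\mathcal{T}$-free HL-extension $(D_2,\phi_2)$ of $C_2$ coherent with $(D_1,\phi_1)$; to make this usable I need $(D_2,\phi_2)$ to be \emph{minimal}, which I arrange by first passing to the minimal sub-extension generated over $C_2$ (coherence and $\mathcal{T}$-freeness are inherited by this restriction). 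I then set $U_{s+1}$ to be the free amalgamation of $U_s$ and $D_2$ over the relevant common substructure; since every $T\in\mathcal{T}$ is a Gaifman clique, the class of finite $\mathcal{T}$-free structures has the free amalgamation property, so $U_{s+1}$ is again finite and $\mathcal{T}$-free. Clauses (i) and (ii) of ultraextensiveness are handled by similar but simpler stages: ultrahomogeneity is achieved by the usual back-and-forth amalgamation of all one-point extensions of finite partial isomorphisms (again using free amalgamation), and clause (ii) by invoking Theorem~\ref{thm_HL} directly to supply a finite $\mathcal{T}$-free HL-extension inside the next $U_{s+1}$.

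Setting $U=\bigcup_s U_s$, I would verify that $U$ is countable and $\mathcal{T}$-free (the latter because $\mathcal{T}$-freeness, or equivalently non-embeddability of Gaifman cliques, is preserved under increasing unions), that $U$ extends $C$, and that each of the three defining clauses of ultraextensiveness holds because every relevant instance was scheduled and resolved at some finite stage, with all constructed substructures and HL-extensions sitting inside $U$ by the free-amalgamation step. The main obstacle I anticipate is the careful management of coherence across the chain: I must ensure that when a later stage produces a coherent extension $(D_2,\phi_2)$ of some $(D_1,\phi_1)$, the maps $\phi_2(p)$ genuinely extend to \emph{automorphisms of all of $U$} (not merely of $D_2$), which requires that subsequent amalgamation stages never destroy the homogeneity needed to extend these partial automorphisms; keeping the bookkeeping consistent so that the group-embedding condition (iii) of Definition~\ref{coherence} survives into the limit, and confirming that minimality is preserved under the free-amalgamation gluing, is where the delicate verification lies.
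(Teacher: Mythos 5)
Your high-level plan coincides with the paper's: an increasing chain of finite $\mathcal{T}$-free structures driven by Theorem~\ref{thmcoherent}, glued by free amalgamation (legitimate because every $T\in\mathcal{T}$ is a Gaifman clique), with the union as the final structure; the paper merely organizes the tasks differently (at stage $n$ it handles \emph{all} triples $(D,D',E)$ inside the finite structure $D_n$ at once when building $Z_n$, and it derives clauses (i) and (ii) of Definition~\ref{uedef} from the single coherent chain $(D_n,\phi_n)$ rather than from separately scheduled one-point-extension and Theorem~\ref{thm_HL} tasks, both of which are legitimate alternatives). However, your core step for clause (iii) has a genuine gap: you invoke Theorem~\ref{thmcoherent} as a black box and then free-amalgamate $D_2$ with $U_s$ ``over the relevant common substructure,'' and no choice of that substructure works. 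The $D_2$ produced by the theorem contains the \emph{free} amalgam of $D_1$ and $C_2$ over $C_1$, whereas inside $U_s$ the set $D_1\cup C_2$ carries in general a different structure: $C_2$ may meet $D_1$ outside $C_1$ (the instance $C_1\subsetneq C_2\subseteq D_1\subseteq U$ is a perfectly legal task for clause (iii)), and there may be relations between $D_1\setminus C_1$ and $C_2\setminus C_1$. If you amalgamate over $D_1$, the copy of $C_2$ inside $D_2$ is a fresh copy distinct from the actual $C_2\subseteq U_s$, so $(D_2,\phi_2)$ is an HL-extension of the wrong structure, while clause (iii) requires $C_2\subseteq D_2\subseteq U$ and $\phi_2(p)\supseteq\phi_1(p)$ \emph{literally}; if you amalgamate over $C_2$, you detach $D_1$ and lose coherence with the actual $(D_1,\phi_1)$; and you cannot amalgamate over $D_1\cup C_2$, since the two structures disagree on that set. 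Ultrahomogeneity of the limit does not rescue this, because an automorphism of $U$ moving the fresh copy of $C_2$ onto the real one will move $D_1$ as well.

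The repair is to open up the proof of Theorem~\ref{thmcoherent} rather than cite its statement: the free amalgam there serves only to manufacture \emph{some} finite $\mathcal{T}$-free structure containing both $D_1$ and $C_2$, and in your situation they already sit inside the finite $\mathcal{T}$-free structure $U_s$. So apply the Siniora--Solecki/Hubi\v{c}ka--Kone\v{c}n\`{y}--Ne\v{s}et\v{r}il coherent-EPPA theorem (quoted in that proof, and in Proposition~\ref{HKNprop}) to $U_s$ itself, obtaining a finite $\mathcal{T}$-free $D_2\supseteq U_s$ and a coherent $\varphi:\mathcal{P}(U_s)\to\mbox{Aut}(D_2)$, and define $\phi_2(p)=\varphi(\phi_1(p))$ for $p\in\mathcal{P}_{C_1}$ and $\phi_2(p)=\varphi(p)$ for $p\in\mathcal{P}_{C_2}\setminus\mathcal{P}_{C_1}$, exactly as in the paper's argument. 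Then $(D_2,\phi_2)$ is an HL-extension of the \emph{actual} $C_2$, coherent with the \emph{actual} $(D_1,\phi_1)$, and you can simply set $U_{s+1}=D_2$ with no gluing step at all; this ambient-structure version is what the paper's construction of $Z_n$ implicitly uses. Two further remarks: your passage to the minimal sub-extension is correct (the orbit of $C_2$ under $\langle\phi_2(\mathcal{P}_{C_2})\rangle$ contains $D_1$ by minimality of $(D_1,\phi_1)$, and both coherence and $\mathcal{T}$-freeness survive the restriction), but your announced ``main obstacle'' is not one: clause (iii) only asks for automorphisms of the finite structure $D_2\subseteq U$, not of $U$, and since all requirements are witnessed by finite configurations inside $U$, later stages cannot injure a requirement once met. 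The only clause needing automorphisms of $U$ is (i), which your richness/back-and-forth tasks (or, in the paper, the union $\bigcup_m\phi_m(p)$ along the coherent chain) do handle.
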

\begin{proof}
Let $C$ be a countable $\mathcal{T}$-free $\mathcal{L}$-structure. Write $C$ as an increasing union of finite $\mathcal{T}$-free $\mathcal{L}$-structures $F_n$ for $n=1,2,\dots$. For $n\geq 1$, inductively define increasing sequences of finite $\mathcal{T}$-free $\mathcal{L}$-structure $C_n$, $D_n$ and $Z_n$ as follows. Let $C_1=F_1$ and $(D_1, \phi_1)$ be a finite $\mathcal{T}$-free, minimal HL-extension of $C_1$. We define $Z_1\supseteq D_1$ such that for every pair $(D, D')$ with $D\subseteq D'\subseteq D_1$ and any minimal HL-extension $(E,\phi)$ of $D$ where $E\subseteq D_1$, there exists a $\mathcal{T}$-free minimal HL-extension $(E',\phi ')$ of $D'$ where $E'\subseteq Z_1$, such that $(E,\phi)$ and $(E',\phi ')$ are coherent. Note that this is possible since there are only finitely many triples $(D,D',E)$ and for any such triple by Theorem \ref{thmcoherent} we can fix a coherent extension $E'$. Finally, to construct $Z_1$, we add $E'\setminus E$ to $D_1$ for all $E'$ corresponding to the triple $(D,D',E)$ such that the union of the new points ($E'\setminus E$) and $E\subseteq D_1$ is an isomorphic copy of $E'$. $Z_1$ is a free amalgamation of $\mathcal{T}$-free structures, and hence is $\mathcal{T}$-free. Let $C_2$ be the free amalgamation of $Z_1$ and $F_2$ over $F_1$.

 In general, assume a finite $C_n$ has been defined for $n>1$. Apply Theorem~\ref{thmcoherent} to obtain a finite $\mathcal{T}$-free, minimal HL-extension $(D_n, \phi_n)$ of $C_n$ that is coherent with $(D_{n-1},\phi_{n-1})$. We use a similar construction to the construction of $Z_1$ from $D_1$ to define $Z_n\supseteq D_n$. Note that $Z_n$ has the property that for every minimal HL-extension in $D_n$, that is, for every $D,E\subseteq D_n$ where $(E,\phi)$ is a minimal HL-extension of $D$, every $D\subseteq D'\subseteq D_n$ has a minimal HL-extension in $Z_n$ that is coherent with $(E,\phi)$. Let $C_{n+1}$ be the free amalgamation of $Z_n$ and $F_{n+1}$ over $F_n$. All structures obtained are $\mathcal{T}$-free.

Let $D$ be the union of the increasing sequence $(D_n)_{n=1}^\infty$. We verify that $D$ is ultraextensive. To verify Definition~\ref{uedef} (i), let $p\in\mathcal{P}_D$. Then there is $n\geq 1$ such that $p\in \mathcal{P}_{C_n}$. Let $n_p$ be the least such $n$. Then for all $m\geq n_p$, $p\subseteq \phi_m(p)\subseteq \phi_{m+1}(p)$ by the coherence of $(D_m,\phi_m)$ with $(D_{m+1}, \phi_{m+1})$. Define $\phi(p)=\bigcup_{m\geq n_p}\phi_m(p)$. Then $\phi(p)$ is an isomorphism of $D$ that extends $p$. 

For Definition~\ref{uedef} (ii), let $F\subseteq D$ be finite. Then there is $n$ such that $F\subseteq C_n$, and it follows that $(D_n, \phi_n\upharpoonright \mathcal{P}_F)$ is an HL-extension of $F$.

Finally, for Definition~\ref{uedef} (iii), let $F\subseteq F'\subset D$ be finite and assume that $(E, \phi)$ is a finite minimal HL-extension of $F$ with $E\subseteq D$. Then, there is a natural number $n$ such that $F',E\subseteq D_n$. By the construction of $Z_n$, there exists a minimal HL-extension $(E',\phi')$ of $F'$ (corresponding to the triple $(F,F',E)$) such that $E'\subseteq Z_n\subseteq D$ and that $(E',\phi')$ is coherent with $(E, \phi)$. \qedhere
\end{proof}

We derive some properties of ultraextensive structures below.

\begin{thm} \label{uectbl}
If $U$ is an ultraextensive $\mathcal{L}$-structure, then every countable substructure $C\subseteq U$ can be extended to a countable ultraextensive substructure $D\subseteq U$.
\end{thm}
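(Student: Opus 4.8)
The plan is to obtain $D$ by a Löwenheim--Skolem style closure argument: I build an increasing chain $C=C_0\subseteq C_1\subseteq\cdots$ of countable substructures of $U$ so that the union $D=\bigcup_n C_n$ witnesses each of the three clauses of Definition~\ref{uedef}. The key point is that $U$ is itself ultraextensive, so whenever a clause demands a witness for some finite data in $C_n$, that witness already exists somewhere in $U$; all I must arrange is that countably many such witnesses are collected and that every finite configuration eventually appears at a finite stage.

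Given a countable $C_n$, it has only countably many finite substructures, and between any two finite substructures there are only finitely many isomorphisms, so $\mathcal{P}_{C_n}$ is countable. I pass to $C_{n+1}$ by adjoining witnesses for countably many tasks, each contributing finitely many points. For clause~(i), for each $p\in\mathcal{P}_{C_n}$ and each $a\in C_n$ I use ultrahomogeneity of $U$ to pick an automorphism $g$ of $U$ extending $p$ and adjoin the single point $g(a)$, so that $p\cup\{(a,g(a))\}$ becomes a partial isomorphism with $a$ in its domain; doing this also for $p^{-1}$ secures one-point extensions on both the domain and the range side. For clause~(ii), for each finite $F\subseteq C_n$ I fix, using clause~(ii) for $U$, a finite HL-extension $(E_F,\psi_F)$ of $F$ with $E_F\subseteq U$, and adjoin $E_F$. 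For clause~(iii), for each finite $F\subseteq F'\subseteq C_n$ and each finite minimal HL-extension $(E,\chi)$ of $F$ with $E\subseteq C_n$ (only finitely many $\chi$ for fixed finite $F,E$, since $\mathcal{P}_F$ and $\mbox{Aut}(E)$ are finite) I fix, using clause~(iii) for $U$, a finite minimal HL-extension $(E',\chi')$ of $F'$ with $E'\subseteq U$ coherent with $(E,\chi)$, and adjoin $E'$. Then $C_{n+1}$ is the substructure of $U$ induced on $C_n$ together with all adjoined points; it is countable, and $D=\bigcup_n C_n$ is a countable substructure of $U$ containing $C$.

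To verify that $D$ is ultraextensive, I note that any finite piece of data in $D$ lies in some $C_n$, so the corresponding task was handled at stage $n$ with its witness inside $C_{n+1}\subseteq D$; moreover the witnessing structures are genuine substructures of $D$ (the induced structure on them is the same whether computed in $U$, in $C_{n+1}$, or in $D$), and coherence for clause~(iii) is an intrinsic property of the inclusion $E\subseteq E'$, hence survives. This gives clauses~(ii) and~(iii) directly. For clause~(i) I check that $D$ has the one-point extension property, and then invoke the standard \emph{back-and-forth} argument for countable structures---using that $\mathcal{P}_D$ is closed under taking inverses to obtain range extensions from domain extensions---to conclude that every $p\in\mathcal{P}_D$ extends to an automorphism of $D$; choosing one such automorphism for each $p$ yields a map $\phi$ with $(D,\phi)$ an HL-extension of $D$, i.e.\ ultrahomogeneity.

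The one genuinely nonmechanical step is clause~(i): ultrahomogeneity is not literally a closure condition the way clauses~(ii) and~(iii) are, so it must be extracted by back-and-forth from the one-point extension property that the construction secures. Everything else is bookkeeping---ensuring that each finite configuration occurs at a finite stage and that only countably many points are adjoined at each step.
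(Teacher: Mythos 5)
Your proof is correct, and the points you flag (intrinsicness of coherence and minimality to the pair $E\subseteq E'$, countability of each stage, closure of $\mathcal{P}_D$ under inverses) are exactly the ones that need checking; but your route to clause~(i) of Definition~\ref{uedef} is genuinely different from the paper's. The paper repeats, inside $U$, the chain construction of Theorem~\ref{uecountable}: it builds an increasing chain of \emph{finite} substructures $C_n\subseteq D_n\subseteq Z_n\subseteq C_{n+1}$ of $U$, where $(D_n,\phi_n)$ is a finite minimal HL-extension of $C_n$ chosen, via clause~(iii) of ultraextensivity of $U$ (in place of Theorem~\ref{thmcoherent}), to be coherent with $(D_{n-1},\phi_{n-1})$; $Z_n$ collects coherent witnesses for the finitely many triples $(D,D',E)$ inside $D_n$; and plain unions inside $U$ replace the free amalgamations. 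Ultrahomogeneity of $D=\bigcup_n D_n$ then comes for free from coherence: any $p\in\mathcal{P}_D$ lies in $\mathcal{P}_{C_n}$ for some $n$, the automorphisms $\phi_m(p)$ for $m\geq n$ form an increasing chain, and $\phi(p)=\bigcup_m \phi_m(p)$ is an automorphism of $D$ extending $p$ --- no back-and-forth is needed, and the whole assignment $\phi$ is produced at once. You instead use countable stages, treat the three clauses as independent closure tasks, and recover clause~(i) by back-and-forth from the one-point extension property, which your construction does secure for all of $\mathcal{P}_D$ (including the intermediate maps arising during the back-and-forth, since stage $n$ handles every pair in $\mathcal{P}_{C_n}\times C_n$). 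What your route buys is economy of hypotheses at that step: only clause~(i) of $U$ is used, with no coherence bookkeeping. What the paper's route buys is a single template serving both Theorem~\ref{uecountable} and this theorem, and, more substantively, it outputs an explicit coherent chain $(D_n,\phi_n)$ of finite HL-extensions of a cofinal sequence of finite substructures; that chain is precisely the structure exploited in the paper's final theorem, where $\bigcup_n\mbox{Aut}(D_n)$ is the dense locally finite subgroup of $\mbox{Aut}(U)$ --- information your $D$ is not directly equipped with, although its existence is of course recoverable since your $D$ is ultraextensive.
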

\begin{proof}
We use a similar argument to the argument in the proof of Theorem \ref{uecountable} to construct $D$. The differences are that in the construction instead of applying Theorem~\ref{thmcoherent} we use the properties of ultrextensive structures to find $(D_n,\phi_n)$; and we consider union of structures instead of free amalgamation to find $C_n,Z_n$. Clearly, all the structures $C_n,D_n,Z_n$ are substructures of $U$ and therefore, $D\subseteq U$.
\end{proof}

\begin{thm} 
If $U$ is a countable ultraextensive $\mathcal{L}$-structure then $\mbox{\rm Aut}(U)$ has a dense locally finite subgroup.
\end{thm}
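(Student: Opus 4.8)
The plan is to exhibit an explicit dense locally finite subgroup of $\mbox{Aut}(U)$ by building it as an increasing union of finite groups, each coming from a finite minimal HL-extension sitting inside $U$, and using the coherence of consecutive extensions to guarantee that these finite groups nest. Recall that a subgroup $L \leq \mbox{Aut}(U)$ is dense (in the usual permutation/pointwise-convergence topology) precisely when for every finite partial isomorphism $p$ of $U$ there is an element of $L$ extending $p$; and $L$ is locally finite when every finitely generated subgroup is finite. So the two goals I must simultaneously achieve are: enough elements to realize every finite partial isomorphism, and a filtration $L = \bigcup_k L_k$ by finite subgroups.

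First I would fix an enumeration of the (countably many) finite partial isomorphisms of $U$, and build an increasing chain of finite substructures $C_1 \subseteq C_2 \subseteq \cdots$ with $\bigcup_k C_k = U$, arranged so that at stage $k$ each partial isomorphism of $C_k$ gets realized by an automorphism coming from a finite minimal HL-extension $D_k \subseteq U$ of $C_k$. Concretely, using ultraextensiveness clause (ii) I first get a finite HL-extension of $C_k$ inside $U$, which by the minimality construction (as in Theorem~\ref{minimalHLextension} and the canonical $\Gamma_{\vec N}$ picture) I can take to be a finite minimal HL-extension $(D_k,\phi_k)$ with $D_k \subseteq U$. The subgroup $K_k = \langle \phi_k(\mathcal{P}_{C_k})\rangle \leq \mbox{Aut}(D_k)$ is finite because $D_k$ is a finite structure, and each of its elements extends to an automorphism of $U$ by ultrahomogeneity (clause (i)). The key structural input is clause (iii): I arrange $C_{k+1} \supseteq D_k$ (so that $D_k$ is among the finite substructures of $U$ being processed) and apply coherence to produce $(D_{k+1},\phi_{k+1})$ coherent with $(D_k,\phi_k)$. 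Coherence clause (iii) of Definition~\ref{coherence} gives a group isomorphic embedding $\kappa: K_k \hookrightarrow K_{k+1}$ sending $\phi_k(p) \mapsto \phi_{k+1}(p)$, and clause (ii) says $\phi_{k+1}(p)$ extends $\phi_k(p)$, so after passing to the corresponding automorphisms of $U$ the finite groups genuinely nest: $\widehat{K_k} \leq \widehat{K_{k+1}}$ inside $\mbox{Aut}(U)$, where $\widehat{K_k}$ denotes the images under the ultrahomogeneous extension map. I then set $L = \bigcup_k \widehat{K_k}$.

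The verification splits cleanly. Local finiteness: any finitely generated subgroup of $L$ lies in some single $\widehat{K_k}$ (finitely many generators each appear by some finite stage, and the stages are increasing), and $\widehat{K_k}$ is finite, so the subgroup is finite. Density: given an arbitrary finite partial isomorphism $p$ of $U$, its domain and range lie in some $C_k$, so $p \in \mathcal{P}(C_k)$; I need $p$ to be realized by an element of $L$. This is where I must be slightly careful — $\phi_k$ is defined on nonidentity partial isomorphisms of $C_k$, so $\phi_k(p)$ is an automorphism of $D_k$ extending $p$ whenever $p \in \mathcal{P}_{C_k}$, and its extension to $U$ is the required element of $\widehat{K_k} \subseteq L$; the identity case is trivial. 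Hence every finite partial isomorphism extends to a member of $L$, which is exactly density.

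The main obstacle, and the step demanding the most care, is the bookkeeping that makes the finite groups nest compatibly across the whole chain rather than merely between consecutive stages. Coherence as stated relates $(D_k,\phi_k)$ to $(D_{k+1},\phi_{k+1})$, and I must ensure the embedding $\kappa$ is exactly the inclusion of automorphisms of $U$ (not merely an abstract isomorphism) so that the images $\widehat{K_k}$ form a genuine increasing chain of subgroups of $\mbox{Aut}(U)$; the uniqueness clause in Definition~\ref{coherence}(iii) together with the fact that $\phi_{k+1}(p)$ literally extends $\phi_k(p)$ is what pins $\kappa$ down to the inclusion, and this is the point I would check most explicitly. A secondary subtlety is the diagonalization ensuring $\bigcup_k C_k = U$ while simultaneously interleaving the requirement $D_k \subseteq C_{k+1}$: I would enumerate $U$ and at stage $k$ throw the $k$-th element into $C_{k+1}$ along with all of $D_k$, so that the chain exhausts $U$ and remains finite at each stage, invoking Theorem~\ref{uectbl} or the construction pattern of Theorem~\ref{uecountable} only insofar as the same finite-stage amalgamation bookkeeping is needed.
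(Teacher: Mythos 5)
Your proposal is correct and takes essentially the same route as the paper's proof: exhaust $U$ by an increasing chain of finite substructures $C_k$, use ultraextensiveness to produce finite minimal HL-extensions $(D_k,\phi_k)$ with $D_k\subseteq U$ that are coherent consecutively, and take the union of the resulting finite groups, verifying density via extension of partial isomorphisms and local finiteness via the filtration. If anything, you are more explicit than the paper (whose one-line proof simply declares $\bigcup_i \mbox{Aut}(D_i)$ to be the desired subgroup) about the one delicate point, namely that the coherence embeddings $K_k\hookrightarrow K_{k+1}$ are realized by literal extension of maps (each generator $\phi_{k+1}(p)$ extends $\phi_k(p)$ and the generators preserve $D_k$), so the finite groups genuinely nest as subgroups of $\mbox{Aut}(U)$ and their union makes sense.
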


\begin{proof}
Let $\{C_i\}_{i=1}^\infty$ be an increasing sequence of finite substructures of $U$ such that $U=\bigcup_{i=1}^\infty C_i$. Since $U$ is an ultraextensive $\mathcal{L}$-structure, we can find an increasing sequence $\{(D_i,\phi_i)\}_{i=1}^\infty$, where each $D_i\subseteq U$, such that $(D_i,\phi_i)$ is an HL-extension of $C_i$ and $(D_{i+1}, \phi_{i+1})$ is coherent with $(D_i,\phi_i)$ for $i=1,2,\dots$. Then, $\bigcup_{i=1}^\infty \mbox{Aut}(D_i)$ is a dense locally finite subgroup of $\mbox{Aut}(U)$.
\end{proof}

\begin{defn}\label{coherentextension}
Let $\mathcal{C}$ be a class of $\mathcal{L}$-structures. We say $\mathcal{C}$ has \textit{the coherent extension property} if it has the EPPA and for finite structures $D\subseteq D'$ in $\mathcal{C}$ and a finite minimal HL-extension $(E,\phi)$ of $D$ where $E$ is also in $\mathcal{C}$, there exists a finite minimal HL-extension $(E',\phi ')$ of $D'$ where $E'$ is in $\mathcal{C}$ and $(E,\phi)$ and $(E',\phi ')$ are coherent. 
\end{defn}

\begin{thm} 
Let $\mathcal{C}$ be a Fra\"{i}ss\'{e} class and $U$ be the Fra\"{i}ss\'{e} limit of $\mathcal{C}$. Then, $U$ is ultraextensive iff $\mathcal{C}$ has the coherent extension property. In particular, if $\mathcal{T}$ is a finite set of Gaifman cliques and $\mathcal{C}$ is the class of $\mathcal{T}$-free structures, then $U$ is ultraextensive.
\end{thm}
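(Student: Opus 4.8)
The statement asserts an equivalence between $U$ being ultraextensive and the Fra\"iss\'e class $\mathcal{C}$ having the coherent extension property, with a concluding special case for $\mathcal{T}$-free classes. The plan is to prove each direction by exploiting the interplay between the Fra\"iss\'e limit's universality/ultrahomogeneity and the three clauses of Definition~\ref{uedef}.

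For the forward direction, suppose $U$ is ultraextensive. I would first note that since $U$ is a Fra\"iss\'e limit it is automatically ultrahomogeneous, so clause (i) is free, and the EPPA for $\mathcal{C}$ follows from clause (ii): given finite $D\subseteq D'$ in $\mathcal{C}$ with every partial automorphism of $D$ extending to an automorphism of $D'$, embed everything into $U$ and apply clause (ii) to obtain a finite HL-extension inside $U$, whose structure lies in $\mathcal{C}$ by the age condition. For the coherent extension property itself, take finite $D\subseteq D'$ in $\mathcal{C}$ and a finite minimal HL-extension $(E,\phi)$ of $D$ with $E\in\mathcal{C}$. Using universality of $U$, embed $D\subseteq D'$ and $E$ into $U$ so that $E\subseteq U$ realizes the HL-extension; then clause (iii) of ultraextensiveness directly supplies a coherent minimal HL-extension $(E',\phi')$ of $D'$ with $E'\subseteq U$, and since $\mbox{Age}(U)=\mathcal{C}$, the finite structure $E'$ lies in $\mathcal{C}$. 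This gives the coherent extension property.

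For the reverse direction, suppose $\mathcal{C}$ has the coherent extension property. Clause (i) is again automatic from ultrahomogeneity of the Fra\"iss\'e limit. For clause (ii), given finite $C\subseteq U$, apply EPPA to obtain a finite HL-extension $(D,\phi)$ with $D\in\mathcal{C}$; since $\mathcal{C}$ is the age of $U$ and $U$ is ultrahomogeneous, I would embed $D$ back into $U$ extending $C$ so that $D\subseteq U$, and transport $\phi$ along this embedding. For clause (iii), given finite $C_1\subseteq C_2\subseteq U$ and a finite minimal HL-extension $(D_1,\phi_1)$ with $D_1\subseteq U$, apply the coherent extension property with $D=C_1$, $D'=C_2$, $E=D_1$ to get a coherent minimal HL-extension $(D_2,\phi_2)$ of $C_2$ with $D_2\in\mathcal{C}$; then use ultrahomogeneity to realize $D_2$ inside $U$ fixing the already-embedded $D_1$ pointwise, preserving coherence. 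The concluding sentence then follows immediately: when $\mathcal{T}$ consists of Gaifman cliques, the class of $\mathcal{T}$-free structures is a Fra\"iss\'e class with free amalgamation, Theorem~\ref{thmcoherent} supplies exactly the coherent extensions needed to verify the coherent extension property, and EPPA holds by Herwig--Lascar, so $U$ is ultraextensive.

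The main obstacle I anticipate is not in either logical direction separately but in the bookkeeping of \emph{realizing abstract finite extensions as substructures of $U$ while preserving the HL-maps and the coherence witness}. Ultrahomogeneity lets me move finite substructures around, but I must ensure that when I re-embed $E'$ (or $D_2$) into $U$, the embedding simultaneously fixes the previously placed copy of $E$ (resp.\ $D_1$) pointwise, so that the automorphisms $\phi'(p)$ of the abstract extension actually restrict to partial isomorphisms of $U$ that glue coherently onto the $\phi$-images; here I would invoke that any isomorphism between finite substructures of a Fra\"iss\'e limit extends to an automorphism of $U$, apply it to the chosen embedding of $E'$, and check that coherence (Definition~\ref{coherence}) is preserved because it is an intrinsic property of the pair of HL-extensions, invariant under such an isomorphic transport. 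The verification that minimality and $\mathcal{T}$-freeness (via the age) survive these transports is routine but must be stated.
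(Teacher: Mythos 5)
Your overall plan is the natural expansion of the paper's proof (which is essentially a one-line ``clear from the definitions'' remark plus a citation of Theorems~\ref{thm_HL} and~\ref{thmcoherent}), and your forward direction and your verification of clause (ii) of Definition~\ref{uedef} are fine. However, there is a genuine gap in your verification of clause (iii) from the coherent extension property. You apply the property to the abstract triple $D=C_1$, $D'=C_2$, $E=D_1$ and then try to transport the resulting abstract extension $(D_2,\phi_2)$ into $U$ ``fixing the already-embedded $D_1$ pointwise.'' The obstacle you flag is real, and your proposed resolution does not overcome it: the abstract $D_2$ knows nothing about how $C_2$ and $D_1$ actually sit together inside $U$. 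In $U$ the intersection $C_2\cap D_1$ may be strictly larger than $C_1$ (for example, in the random graph take $C_1$ an edge $\{a,b\}$, $D_1$ a $4$-cycle on $\{a,b,c,d\}$, which is a minimal HL-extension of $C_1$, and $C_2=\{a,b,c\}$), and $U$ may impose relations between $C_2\setminus C_1$ and $D_1\setminus C_1$ that are absent in $D_2$ (a $D_2$ produced as in Theorem~\ref{thmcoherent} glues $C_2$ and $D_1$ freely over $C_1$). In either case no injective embedding of $D_2$ into $U$ can simultaneously restrict to the identity on $D_1$ and carry the copy of $C_2$ onto the actual $C_2\subseteq U$; and invoking ultrahomogeneity to reposition the copy of $C_2$ does not help, since any automorphism of $U$ doing so moves $D_1$ as well. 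Your remark that coherence is invariant under isomorphic transport is true but beside the point: the required isomorphism of configurations need not exist, and note that Definition~\ref{coherence}(ii) forces literal containments $\phi_2(p)\supseteq\phi_1(p)$, so clause (iii) cannot be satisfied ``up to isomorphism.''

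The repair is to feed the ambient configuration into the hypothesis rather than try to recover it afterwards: apply the coherent extension property with $D=C_1$ and $D'=C_2\cup D_1$ (a finite substructure of $U$, hence in $\mathcal{C}$; since $E=D_1\subseteq D'$, the output $(E'',\phi'')$ now literally contains $C_2\cup D_1$ with its correct structure), and then pass to the substructure $D_2\subseteq E''$ consisting of $C_2$ together with all points $\phi''(p_1)\cdots\phi''(p_n)(a)$ with $p_i\in\mathcal{P}_{C_2}$, $a\in C_2$, restricting $\phi''$ accordingly. Minimality of $(D_1,\phi_1)$ over $C_1$ together with $\phi''(p)\supseteq\phi_1(p)$ for $p\in\mathcal{P}_{C_1}$ guarantees $D_1\subseteq D_2$, and one checks that $(D_2,\phi_2)$ is a minimal HL-extension of $C_2$ still coherent with $(D_1,\phi_1)$. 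Since $D_2\supseteq C_2\cup D_1$ literally, the Fra\"{i}ss\'{e} extension property embeds $D_2$ into $U$ over $C_2\cup D_1$, and minimality and coherence survive this transport. A smaller instance of the same omission occurs in your final paragraph: Theorem~\ref{thmcoherent} produces a coherent HL-extension that is not asserted to be minimal, whereas Definition~\ref{coherentextension} demands minimality, so the same cut-down-to-reachable-points argument is needed there as well.
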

\begin{proof}
The equivalence is clear by Definition \ref{coherentextension}. The second part is the direct consequence of Theorem \ref{thm_HL} and Theorem \ref{thmcoherent}.
\end{proof}

\begin{cor}
The Henson graph $G_n$, the Fra\"{i}ss\'{e} limit of the class of $K_n$-free graphs, is ultraextensive for every natural number $n$.
\end{cor}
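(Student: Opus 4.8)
The plan is to realize the class of $K_n$-free graphs as the class of $\mathcal{T}$-free structures for a suitable finite set $\mathcal{T}$ of Gaifman cliques, and then to invoke the preceding theorem directly. Graphs are $\mathcal{L}$-structures in the language $\mathcal{L}=\{R\}$ consisting of a single binary relation symbol $R$, interpreted as a symmetric and irreflexive edge relation. I would set $\mathcal{T}=\{K_n\}$, where $K_n$ denotes the complete graph on $n$ vertices, in which $R^{K_n}(a,b)$ holds for every pair of distinct vertices $a,b$.

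The first step is to check that $K_n$ is a Gaifman clique. This is immediate for $n\geq 2$: given any two vertices $a,b$ of $K_n$, the binary relation $R$ has arity $\geq 2$, and (taking $c_1=a$, $c_2=b$ when $a\neq b$, and using any incident edge when $a=b$) it witnesses that $a$ and $b$ lie together in a tuple on which $R$ holds. Thus $\mathcal{T}=\{K_n\}$ is a finite set of Gaifman cliques; the case $n=1$ is degenerate and can be handled separately.

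The key step is to verify that the class of $\{K_n\}$-free $\mathcal{L}$-structures coincides with the class of $K_n$-free graphs. Here I would use that a homomorphism $h\colon K_n\to M$ into any graph $M$ is automatically injective: if $h(a)=h(b)$ for distinct vertices $a,b$ of $K_n$, then $R^{K_n}(a,b)$ forces $R^M(h(a),h(a))$, contradicting the irreflexivity of $M$. Since $K_n$ is complete, an injective homomorphism identifies $n$ pairwise adjacent vertices of $M$, i.e.\ a copy of $K_n$ as a subgraph; conversely, any such copy visibly yields a homomorphism. Hence $M$ admits a homomorphism from $K_n$ if and only if $M$ contains $K_n$, so the $\{K_n\}$-free structures are exactly the $K_n$-free graphs.

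With this identification in hand, the class $\mathcal{C}$ of $K_n$-free graphs is precisely the class of $\mathcal{T}$-free structures for the finite set $\mathcal{T}=\{K_n\}$ of Gaifman cliques, and its Fra\"{i}ss\'{e} limit is by definition the Henson graph $G_n$. The conclusion then follows at once from the ``in particular'' clause of the preceding theorem. The only point requiring care is precisely the equivalence between the homomorphism-based notion of $\mathcal{T}$-freeness and the usual subgraph-based notion of $K_n$-freeness; once the forced injectivity of homomorphisms out of the complete graph is observed, the remainder is a direct application.
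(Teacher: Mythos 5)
Your proposal is correct and follows essentially the same route as the paper, which obtains the corollary as an immediate application of the ``in particular'' clause of the preceding theorem with $\mathcal{T}=\{K_n\}$; your verifications that $K_n$ is a Gaifman clique and that, for graphs, $\{K_n\}$-freeness in the homomorphism sense agrees with the usual subgraph sense are exactly the details the paper leaves implicit. The one imprecision---shared with the paper---is that the class of all $\{K_n\}$-free $\{R\}$-structures also contains non-symmetric structures, so strictly one should either read the theorem inside the category of graphs or symmetrize the extensions it produces (replacing $R$ by its symmetric part preserves $K_n$-freeness, the embeddings, and the automorphisms involved); this is routine and does not affect the argument.
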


\end{document}